\newcommand{\Mo}{(M,\omega )}
\newtheorem{lemma}{Lemma}
\newtheorem{theorem}{Theorem}
\newtheorem{proposition}{Proposition}
\newtheorem{corollary}{Corollary}
\theoremstyle{remark}
\newtheorem{remark}{Remark}
\newtheorem{example}{Example}
\begin{document}

\title[New constructions of symplectically fat fiber bundles]{New constructions of symplectically fat fiber bundles}

\author[Boche\'nski]{Maciej Boche\'nski}

\address{
Department of Mathematics and Computer Science\\
University of Warmia and Mazury\\
S\l\/oneczna 54\\
10-710 Olsztyn\\
Poland}

\email{mabo@matman.uwm.edu.pl}

\author[Szczepkowska]{Anna Szczepkowska}

\address{
Department of Mathematics and Computer Science\\
University of Warmia and Mazury\\
S\l\/oneczna 54\\
10-710 Olsztyn\\
Poland}

\email{anna.szczepkowska@matman.uwm.edu.pl}

\author[Tralle]{\mbox{Aleksy Tralle}}

\address{
Department of Mathematics and Computer Science\\
University of Warmia and Mazury\\
S\l\/oneczna 54\\
10-710 Olsztyn\\
Poland}

\email{tralle@matman.uwm.edu.pl}

\author[Woike]{\mbox{Artur Woike}}

\address{
Department of Mathematics and Computer Science\\
University of Warmia and Mazury\\
S\l\/oneczna 54\\
10-710 Olsztyn\\
Poland}

\email{awoike@matman.uwm.edu.pl}

\begin{abstract}
This work is devoted to new constructions of symplectically fat fiber bundles. The latter are constructed in two ways: using the Kirwan map and expressing the fatness condition in terms of the isotropy representation related to the $G$-structure over some homogeneous spaces.
\end{abstract}

\maketitle

\section{Introduction}

It is well known that basically there are two general ways to endow the total space of a fiber bundle
$$F\rightarrow M\rightarrow B$$
with a fiberwise symplectic form, that is, a symplectic form on $M$ which restricts to a symplectic form on the fiber. The first one is given by Thurston's theorem \cite{ms} (Theorem 6.3).

\begin{theorem}[Thurston] Let there be given a fiber bundle over a compact symplectic base $B$ and a symplectic fiber $(F,\sigma)$. Assume that 
\begin{enumerate}
\item the structure group of the bundle reduces to the group of symplectomorphisms of the fiber,
\item there exists a cohomology class $a\in H^2(M)$ which restricts to the cohomology class $[\sigma]$ on the fiber.
\end{enumerate}
Under these assumptions $M$ admits a fiberwise symplectic form.
\end{theorem}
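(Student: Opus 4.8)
The plan is to construct the fiberwise symplectic form in two stages: first obtain a closed $2$-form on $M$ that is already nondegenerate on every fiber, and then repair its degeneracy in the horizontal directions by adding a large multiple of the pulled-back base form. Write $\pi\colon M\to B$ for the projection and $\omega_B$ for the symplectic form on $B$. Using compactness of $B$ I would choose a finite cover $\{U_i\}$ of $B$ by contractible open sets together with local trivializations $\psi_i\colon\pi^{-1}(U_i)\to U_i\times F$ whose transition functions take values in $\operatorname{Symp}(F,\sigma)$, which is available by hypothesis~(1). Letting $\operatorname{pr}\colon U_i\times F\to F$ denote the projection, set $\rho_i:=\psi_i^{*}\operatorname{pr}^{*}\sigma$, a closed $2$-form on $\pi^{-1}(U_i)$. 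The first point is that the pullback of $\rho_i$ to a fiber $F_b$ is independent of $i$ over any overlap: it equals $g_i^{*}\sigma$, where $g_i\colon F_b\to F$ is the diffeomorphism induced by $\psi_i$, and since $g_ig_j^{-1}$ is a symplectomorphism of $(F,\sigma)$ one gets $g_i^{*}\sigma=g_j^{*}\sigma$. I will call this common restriction $\sigma_b$; it is a symplectic form on $F_b$.

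Next I would bring in hypothesis~(2). Fix a closed $2$-form $\alpha$ on $M$ representing $a$. Because $U_i$ is contractible, the inclusion $F_b\hookrightarrow\pi^{-1}(U_i)$ induces an isomorphism on second cohomology, and under it both $[\rho_i]$ and $a|_{\pi^{-1}(U_i)}$ map to $[\sigma]$; hence $\rho_i-\alpha|_{\pi^{-1}(U_i)}$ is exact, say $\rho_i-\alpha=d\lambda_i$ for some $1$-form $\lambda_i$ on $\pi^{-1}(U_i)$. Choosing a partition of unity $\{f_i\}$ on $B$ subordinate to $\{U_i\}$, I would then set
$$\tau_0:=\alpha+\sum_i d\bigl((\pi^{*}f_i)\,\lambda_i\bigr),$$
a globally defined closed $2$-form on $M$ with $[\tau_0]=a$. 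Restricting to a fiber $F_b$, each $\pi^{*}f_i$ is the constant $f_i(b)$, so the terms $d(\pi^{*}f_i)\wedge\lambda_i$ vanish along $F_b$ and
$$\tau_0|_{F_b}=\alpha|_{F_b}+\sum_i f_i(b)\,(\rho_i-\alpha)|_{F_b}=\Bigl(1-\sum_i f_i(b)\Bigr)\alpha|_{F_b}+\sum_i f_i(b)\,\sigma_b=\sigma_b,$$
using $\sum_i f_i\equiv 1$. Thus $\tau_0$ is closed and nondegenerate on each fiber.

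Finally I would set $\tau:=\tau_0+K\,\pi^{*}\omega_B$ for a constant $K>0$, which is again closed, and check that $\tau$ is symplectic once $K$ is large. At a point $p\in M$ the form $\tau_0$ is nondegenerate on $W:=\ker d\pi_p=T_pF_b$, while $\pi^{*}\omega_B$ annihilates $W$ and induces a nondegenerate form on $T_pM/W$; a standard pointwise linear-algebra estimate (write both forms in a block decomposition adapted to $W$ and take a Schur complement) then shows $\tau_p$ is nondegenerate for all sufficiently large $K$, and since $M$ is compact a single $K$ works at every point --- or, if the fiber is noncompact, one uses a suitable positive function on $B$ in place of the constant. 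Since $\tau|_{F_b}=\sigma_b$ is symplectic, $\tau$ is the desired fiberwise symplectic form.

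I expect the patching step to be the main obstacle: the local forms $\rho_i$ never agree on the nose, and what makes it possible to average them into one global closed form without destroying the fiberwise nondegeneracy is precisely the existence of a single class $a$ restricting to $[\sigma]$, i.e.\ hypothesis~(2). Hypothesis~(1) is used only to make the fiberwise restrictions $\rho_i|_{F_b}$ coincide, so that $\sigma_b$ is well defined, and the concluding nondegeneration is routine linear algebra together with compactness of the base.
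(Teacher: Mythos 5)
The paper does not prove this statement at all: it is quoted verbatim from McDuff--Salamon (Theorem~6.3 of \cite{ms}), so there is no internal proof to compare against. Your argument is precisely the standard Thurston patching construction given there --- local forms $\psi_i^*\operatorname{pr}^*\sigma$, exactness of $\rho_i-\alpha$ over contractible $U_i$, the partition-of-unity correction $\alpha+\sum_i d((\pi^*f_i)\lambda_i)$ restricting to $\sigma_b$ on fibers, then adding $K\pi^*\omega_B$ --- and it is correct in the setting intended here, where the fiber is closed (hence $M$ compact and a single large $K$ suffices). The only caveat is your parenthetical remark about noncompact fibers: replacing $K$ by a function of the base point alone does not obviously work, since the constant needed at points of a single noncompact fiber need not be bounded; this aside should either be dropped or argued separately, but it does not affect the theorem as stated.
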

\noindent The second way due to Sternberg, Weinstein and Lerman \cite{ster},\cite{wein},\cite{ler} is described as follows.

Let  $G\rightarrow P\rightarrow B$ 
be a principal bundle with a connection.
Let $\theta$ and $\Theta$ be the connection one-form and
the curvature form of the connection, respectively.  Both forms have
values in the Lie algebra $\mathfrak{g}$ of the group $G$. 
Denote the pairing between $\mathfrak{g}$ and
its dual $\mathfrak{g}^*$ by $\langle\,,\rangle$.  By definition, a vector
$u\in\mathfrak{g}^*$ is {\em  fat}, if the two--form
$$
(X,Y)\rightarrow \langle\Theta(X,Y),u\rangle
$$
is non-degenerate for all {\it horizontal} vectors $X,Y$. Note
that if a connection admits a fat vector $u$ then  the whole coadjoint orbit of $u$ consists of fat vectors. 

The following result is due to Sternberg and Weinstein.

\begin{theorem}[Sternberg-Weinstein]\label{thm:s-w} 
Let $\Mo$ be a a symplectic manifold with a Hamiltonian action of a
Lie group $G$ and a moment map $\mu: M\rightarrow\mathfrak{g}^*$.
Let $G\rightarrow P\rightarrow B$ be a principal bundle. 
If there exists a connection in the principal bundle $P$ such that all
vectors in $\mu(M)\subset\mathfrak{g}^*$ are fat, then the 
 total space of the associated bundle
$$
M\rightarrow P\times_G M\rightarrow B
$$
admits a symplectic form which restricts to the given symplectic form on the fiber.
\label{sw}
\end{theorem}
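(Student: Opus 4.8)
The plan is to produce the required form by the minimal coupling construction of Sternberg and Weinstein. Write $\pi_P\colon P\times M\to P$ and $\pi_M\colon P\times M\to M$ for the two projections, regard the connection form as a $\mathfrak g$-valued $1$-form $\pi_P^*\theta$ on $P\times M$ and the moment map as the $\mathfrak g^*$-valued function $\mu\circ\pi_M$, and set
$$\lambda:=\langle \mu\circ\pi_M,\ \pi_P^*\theta\rangle\in\Omega^1(P\times M),\qquad \widetilde\Omega:=\pi_M^*\omega+d\lambda ,$$
which is closed by construction. The diagonal $G$-action on $P\times M$ (the principal action on $P$, the Hamiltonian action on $M$) is free, since it is already free on $P$, so $P\times_G M$ is a manifold and the quotient map $\pi\colon P\times M\to P\times_G M$ is a principal $G$-bundle. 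First I would show that $\widetilde\Omega$ is basic; then it equals $\pi^*\Omega$ for a unique closed $2$-form $\Omega$ on $P\times_G M$, which will be our candidate.

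To this end, invariance of $\widetilde\Omega$ follows from the $G$-invariance of $\omega$ and of $\lambda$, the latter being a consequence of the equivariance of $\mu$ together with the $\mathrm{Ad}$-equivariance of $\theta$. For horizontality, let $\zeta_\xi=(\xi_P,\xi_M)$ be the fundamental vector field of the diagonal action attached to $\xi\in\mathfrak g$. Since $\theta(\xi_P)=\xi$, one computes $\iota_{\zeta_\xi}\lambda=\langle \mu\circ\pi_M,\xi\rangle$; combining this with $\mathcal L_{\zeta_\xi}\lambda=0$ and Cartan's magic formula gives $\iota_{\zeta_\xi}\,d\lambda=-\,d\langle\mu\circ\pi_M,\xi\rangle$, while the moment map identity $\iota_{\xi_M}\omega=d\langle\mu,\xi\rangle$ gives $\iota_{\zeta_\xi}\pi_M^*\omega=d\langle\mu\circ\pi_M,\xi\rangle$. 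Adding the two, $\iota_{\zeta_\xi}\widetilde\Omega=0$, so $\widetilde\Omega$ is indeed basic and descends to a closed form $\Omega$ on $P\times_G M$.

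The heart of the matter is then to identify $\Omega$ and to prove that it is non-degenerate; this is where fatness enters. Fix $(p,m)\in P\times M$ with image $b\in B$ and use the connection to split $T_pP=H_p\oplus V_p$; every class in $T_{[p,m]}(P\times_G M)$ is then uniquely represented by $[(X^h_p,w)]$ with $X\in T_bB$, $w\in T_mM$, where $X^h_p\in H_p$ is the horizontal lift. Because $\pi_P^*\theta$ annihilates horizontal vectors and vectors tangent to the $M$-factor, every term of $d\lambda$ containing $\theta$ — those coming from $\langle d\mu\wedge\theta\rangle$ and from the $[\theta,\theta]$-part in the structure equation $d\theta=-\tfrac12[\theta,\theta]+\Theta$ — drops out on such representatives, and only the curvature term remains, so that
$$\Omega\bigl([(X^h_p,w)],\,[(Y^h_p,w')]\bigr)=\omega(w,w')+\bigl\langle \Theta_p(X^h_p,Y^h_p),\mu(m)\bigr\rangle .$$
Putting $X=Y=0$ shows $\Omega$ restricts to $\omega$ on the fibre $\{[p,m]:m\in M\}\cong M$, as required. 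For non-degeneracy, the right-hand side is block diagonal with respect to the decomposition $T_mM\oplus T_bB$: the $T_mM$-block is $\omega$, hence non-degenerate, and the $T_bB$-block is exactly the two-form $(X,Y)\mapsto\langle\Theta(X,Y),\mu(m)\rangle$ on the horizontal space, which is non-degenerate precisely because $\mu(m)\in\mu(M)$ is fat. Hence $\Omega$ is symplectic. The one genuinely delicate point is this last step: it is essential to choose the complement of the orbit directions carefully (horizontal in the $P$-factor, arbitrary in the $M$-factor), so that the coupling form cleanly decouples into the fibre form $\omega$ and the fat curvature pairing along the base; any other choice mixes the two and obscures the role of fatness.
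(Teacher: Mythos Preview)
The paper does not actually prove this theorem; it is quoted as a result of Sternberg and Weinstein with references to \cite{ster} and \cite{wein}. The only thing the paper offers is the remark at the end of the introduction that Sternberg's \emph{coupling form} $\Omega\in\Omega^2(E)$, a closed $2$-form attached to the connection $\theta$ which restricts to $\omega$ on fibres, becomes non-degenerate precisely when $\mu(M)$ consists of fat vectors, and then coincides with the symplectic form of Theorem~\ref{thm:s-w}.

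Your argument is correct and is exactly this coupling-form construction made explicit: your $\widetilde\Omega=\pi_M^*\omega+d\langle\mu,\theta\rangle$ is Sternberg's minimal-coupling form on $P\times M$, and your verification that it is basic and that the quotient form decomposes, on the horizontal representatives $(X^h_p,w)$, as $\omega(w,w')+\langle\Theta_p(X^h_p,Y^h_p),\mu(m)\rangle$ is the standard computation. The block-diagonality then reduces non-degeneracy to the fatness of $\mu(m)$, which is the hypothesis. So your proof agrees with the approach the paper sketches (and attributes to the original sources) rather than with any detailed proof in the paper itself.
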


\noindent Throughout this article  we will call the fiber bundles satisfying the assumptions of Theorem \ref{thm:s-w}  {\it symplectically fat}. The primary purpose of this article is to construct new examples of such bundles.The idea of constructing fiberwise symplectic structures on associated bundles using fatness of $\mu(M)$ is  due to Weinstein \cite{wein} (Theorem 3.2 in the cited work). 

There are several reasons to do this. The most important one comes from symplectic topology (cf. \cite{ms}, Chapter 6). Symplectically fat fiber bundles yield families  of symplectic manifolds with various prescribed properties serving as important testing examples.

The second reason of our interest in symplectically fat bundles comes from the book by Guillemin, Lerman and Sternberg \cite{gls}. In this book, the authors show how to apply fat bundle constructions in representation theory. Basically, the authors use the particular case of fat bundles which are coadjoint orbits fibered over coadjoint orbits. However, an analysis of the general case may yield new applications.
We also want to mention an important article \cite{fp} which is related to symplectic fatness. The authors constructed there some non-K\"ahler manifolds with trivial canonical bundle by using a special case of a theorem of Reznikov  on the existence of  symplectic structures on some twistor bundles \cite{rez}. The latter result turned out to be a consequence of the fact that these  bundles are symplectically fat   \cite{ktw}.

Although the method of fat bundles seems interesting and useful, it is extremely difficult to even find  examples satisfying the fatness condition. Surprisingly, the only known examples of it are the following classes of bundles (see \cite{ktw}):
\begin{enumerate}
\item Let $G$ be a semisimple Lie group and $H\subset G$ compact subgroup of maximal rank. Bundles of the form
$$H/K\rightarrow G/K=G\times_H(H/K)\rightarrow G/H$$
where $K=Z_G(T)\subset H$ for some torus in $G$ satisfy the fatness condition.
\item Another example are twistor bundles of the form
$$SO(2n)/U(n)\rightarrow \mathcal{T}(M)\rightarrow M,$$
where $(M^{2n},g)$ is an even-dimensional Riemannian manifold with $\frac{3}{2n+1}$-pinched sectional curvature $K_{g}$ ($K_{g}$ satisfies $1-\frac{3}{2n+1} \leq |K_{g}| \leq 1$).
\item Also locally homogeneous complex manifolds $\Gamma\setminus G/V$  fibered over locally symmetric Riemannian manifolds as follows:
$$K/V\rightarrow \Gamma\setminus G/V\rightarrow \Gamma\setminus G/K,$$
where $G$ is a semisimple Lie group of non-compact type, $\Gamma$ is a uniform lattice in $G$, $K$ a maximal compact subgroup in $G$ and $V=Z_G(T)\subset K$ for some torus $T$ in $G$, satisfy the fatness condition.
\end{enumerate}

\noindent Since symplectically fat fiber bundles are  rare and difficult to construct,  it is tempting to find some other classes of them. In this paper we do find such classes. Here are the main results.
\begin{enumerate}
\item Let now $K$ be a semisimple connected Lie group, $H\subset K$ a compact subgroup such that $H=Z_K(T)$ for some torus $T$ in $K$. Using the Kirwan map as an analogue of the moment map for the non-abelian case, we prove the following. Let there be given an associated bundle
$$M\rightarrow K\times_HM\rightarrow K/H,$$
where $(M,\omega)$ is a closed symplectic manifold with a Hamiltonian $H$-action. Then this bundle is symplectically fat (Theorem \ref{gl}).
\item Now assume that $K$ is a compact semisimple Lie group and $H\subset K$ is its subgroup. We find sufficient conditions ensuring that a $G$-structure 
$$G\rightarrow P\rightarrow K/H$$
over compact reductive homogeneous space $K/H$ admits a symplectically fat associated bundle
$$G/G_{\xi}\rightarrow P\times_{G_{\xi}}(G/G_{\xi})\rightarrow K/H$$ 
with coadjoint orbits $G/G_{\xi}$ as fibers (Theorem \ref{thm:bilinear} and Corollary \ref{cor:lerman}) in terms of the isotropy representation. This is simple but important  idea of this paper.
\item Theorem \ref{thm:bilinear} yields conditions on the isotropy representation ensuring that the twistor bundle over a homogeneous space $K/H$ of maximal rank is symplectic (Theorem \ref{thm:lerman-twistor}). Note that in this formulation we mean that $K/H$ is of maximal rank, that is $\text{rank}\,H=\text{rank}\,K$.
\item We obtain new examples of symplectic twistor bundles over non-symplectic homogeneous spaces $K/H$ (Proposition \ref{prop:Cn}).
\end{enumerate}
The constructed classes of symplectically fat fiber bundles are not covered by the previously known results. 

Our interest to twistor bundles is, in part, motivated by the result of Reznikov \cite{rez}, who proved symplecticness of twistor bundles over even-dimensional Riemannian manifolds satisfying restrictions on sectional curvature (which we have already mentioned). Moreover, in \cite{ktw}, more conceptual proof (based on symplectic fatness) was given. In this work we are interested in proving symplectic fatness of twistor bundles over non-symplectic homogeneous bases.  
We complete the introduction by mentioning a relation between symplectic fatness and the notion of the coupling form due to Sternberg.
Let $\Mo$ be a closed symplectic manifold with a Hamiltonian action
of a Lie group $G$ and a moment map $\mu:M\to \mathfrak g^*$. 
Consider the associated  bundle
$$
\Mo \to E:=P\times_G M \to B.
$$

Sternberg \cite{ster} constructed a certain closed two--form
$\Omega\in\Omega^2(E)$ associated with the connection $\theta$.  It is
called the {\em coupling form} and pulls back to the symplectic form
on each fiber and it is degenerate in general. However, if the image
of the moment map consists of fat vectors then the coupling form is
non-degenerate. In the latter case it coincides with the symplectic form given by Theorem \ref{thm:s-w}. The Sternberg's coupling form has the following properties:
\begin{enumerate}
\item $i^*\Omega=\omega$ and
\item $\pi_{!}\Omega^{n+1}=0$, $\dim\,M=2n$,
\end{enumerate}
where $i$ denotes the fiber inclusion,  $\pi: E\rightarrow B$ the fiber bundle projection, and $\pi _!$ is the fiber integration. 
These properties also serve as an abstract definition of the coupling form \cite{ktw}. The non-degeneracy of the latter can be used in studying some homotopy properties of the group of Hamiltonian diffeomorphisms. Since this is not the aim of the present work, we refer to \cite{ktw} for the details.

\section{Fatness of the canonical connection in the principal bundle $H\rightarrow G\rightarrow G/H$}\label{sec:lermantheorem}

Here we  introduce some notation which will be used throughout this work. We denote by $\mathfrak{g}$ the Lie algebra of a semisimple Lie group $G$. Let $H$ be a compact subgroup of maximal rank in $G$ and let $\mathfrak{h}$ denote the Lie algebra of $H$. The symbols $\mathfrak{g}^{\mathbb{C}}$, $\mathfrak{h}^{\mathbb{C}}$,... denote the complexifications.  Let
$\mathfrak{t}$ be a maximal abelian subalgebra in $\mathfrak{h}$. Then
$\mathfrak{t}^{\mathbb{C}}$ is a Cartan subalgebra in $\mathfrak{g}^{\mathbb{C}}$.  We denote by
$\Delta=\Delta(\mathfrak{g}^{\mathbb{C}},\mathfrak{t}^{\mathbb{C}})$ the root system of $\mathfrak{g}^{\mathbb{C}}$
with respect to $\mathfrak{t}^{\mathbb{C}}$. Under these choices
the root system for $\mathfrak{h}^{\mathbb{C}}$ is a subsystem of $\Delta$. Denote
this subsystem by $\Delta(\mathfrak{h})$.

If the Killing form $B_\mathfrak{g}$ is nondegenerate on $\mathfrak h$ then the
subspace 
$$
\mathfrak m:= 
\{X\in \mathfrak g\,|\, B_\mathfrak{g}(X,Y)=0,\, \text{for all } Y\in \mathfrak h\,\}
$$
defines a decomposition
$$
\mathfrak{g}=\mathfrak{h}\oplus\mathfrak{m}. 
$$
The decomposition is $\operatorname{Ad}_H$-invariant and the
restriction of the Killing form to $\mathfrak m$ is nondegenerate.
The decomposition complexifies to
$\mathfrak{g}^{\mathbb{C}}=\mathfrak{h}^{\mathbb{C}}\oplus\mathfrak{m}^{\mathbb{C}}$. 
Thus, we have root space decompositions:

\begin{align*}
\mathfrak{g}^{\mathbb{C}} &= \mathfrak{t}^{\mathbb{C}}+\sum_{\alpha\in\Delta}\mathfrak{g}^{\alpha},\\
\mathfrak{h}^{\mathbb{C}} &= \mathfrak{t}^{\mathbb{C}}+\sum_{\alpha\in\Delta(\mathfrak{h})}\mathfrak{g}^{\alpha},\\
\mathfrak{m}^{\mathbb{C}} &= \sum_{\alpha\in\Delta\setminus\Delta(\mathfrak{h})}\mathfrak{g}^{\alpha}.
\end{align*}

\noindent Since $G$ is semisimple, the Killing form $B_\mathfrak{g}$ defines an isomorphism
$$\mathfrak g \cong \mathfrak g^*$$ between the Lie algebra of $G$ and
its dual. The composition
$$
\mathfrak h \hookrightarrow \mathfrak g \stackrel{\cong}\longrightarrow 
\mathfrak g^* \to \mathfrak h^*
$$
is an $Ad_H$-equivariant isomorphism. Let us denote this isomorphism by 
$X_u\mapsto u$. Let $C\subset\mathfrak{t}$ be a Weyl chamber of $G$
and let $C_{\alpha}$ denote its wall determined by the root $\alpha$, that is $C_{\alpha}=\ker\alpha$ is the hyperplane defined by the root $\alpha$. 

In the formulation below we need the notion of the canonical connection in the principal bundle 
$$H\rightarrow G\rightarrow G/H.$$ The definition (in the more general setting of invariant connections in $G$-structures over reductive homogeneous spaces) is given in Section \ref{sec:g-structures} (see Remark \ref{r2}).

\begin{theorem}\label{T:lerman} 
Let $G$ be a semisimple Lie group, and $H\subset G$ a compact subgroup
of maximal rank. Suppose that the Killing form $B_\mathfrak{g}$ of $G$ is
nondegenerate on the Lie algebra $\mathfrak h\subset \mathfrak g$ of
the subgroup $H.$ Let $u\in\mathfrak{h}^*$. The following conditions are equivalent
\begin{enumerate}
\item
A vector $u\in\mathfrak{h}^*$ is fat with respect to the
the canonical invariant connection in the principal bundle
$$
H\rightarrow G\rightarrow G/H.
$$ 
\item
The vector $X_u$ does not belong to the set
$$
Ad_H(\cup_{\alpha\in\Delta\setminus\Delta(\mathfrak{h})}C_{\alpha}).
$$
\item
The isotropy subgroup $V\subset H$ of $u\in \mathfrak h^*$ with respect to the coadjoint
action is the centralizer of a torus in $G$.
\end{enumerate}
\end{theorem}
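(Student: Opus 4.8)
The plan is to work entirely at the level of the Lie algebra, translating the fatness condition into a statement about the curvature of the canonical connection. Recall that for the reductive decomposition $\mathfrak g = \mathfrak h \oplus \mathfrak m$, the canonical connection on $H\to G\to G/H$ has curvature form whose value on two horizontal vectors corresponding to $X,Y\in\mathfrak m$ is (up to sign) the $\mathfrak h$-component $[X,Y]_{\mathfrak h}$ of the bracket. Hence, after using the Killing-form identification $X_u\mapsto u$, fatness of $u$ is equivalent to nondegeneracy of the bilinear form
\[
B_u(X,Y) := B_\mathfrak{g}([X,Y]_{\mathfrak h},X_u) = B_\mathfrak{g}([X,Y],X_u) = B_\mathfrak{g}(X,[Y,X_u])
\]
on $\mathfrak m$. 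The first step is thus to record this reduction and to note that $B_u$ is (a scalar multiple of) the bilinear form $(X,Y)\mapsto B_\mathfrak{g}(\operatorname{ad}_{X_u}X, Y)$ restricted to $\mathfrak m$, i.e.\ essentially the restriction of $\operatorname{ad}_{X_u}$ seen through the (nondegenerate) Killing form on $\mathfrak m$.

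The second step is to diagonalize. Since the isotropy subgroup of $u$ only changes by conjugation if we move $X_u$ within its $\operatorname{Ad}_H$-orbit, and similarly fatness is $\operatorname{Ad}_H$-invariant, we may assume $X_u\in\mathfrak t$. Then on $\mathfrak m^{\mathbb C} = \sum_{\alpha\in\Delta\setminus\Delta(\mathfrak h)}\mathfrak g^\alpha$ the operator $\operatorname{ad}_{X_u}$ acts on the root space $\mathfrak g^\alpha$ by the scalar $\alpha(X_u)$. The Killing form pairs $\mathfrak g^\alpha$ nondegenerately with $\mathfrak g^{-\alpha}$ and both $\alpha$ and $-\alpha$ lie in $\Delta\setminus\Delta(\mathfrak h)$, so $B_u$ restricted to $\mathfrak g^\alpha\oplus\mathfrak g^{-\alpha}$ is nondegenerate precisely when $\alpha(X_u)\neq 0$. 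Therefore $B_u$ is nondegenerate on all of $\mathfrak m^{\mathbb C}$ iff $\alpha(X_u)\neq 0$ for every $\alpha\in\Delta\setminus\Delta(\mathfrak h)$, i.e.\ iff $X_u\notin\bigcup_{\alpha\in\Delta\setminus\Delta(\mathfrak h)}C_\alpha$. Undoing the conjugation gives the equivalence $(1)\Leftrightarrow(2)$: $u$ is fat iff $X_u\notin \operatorname{Ad}_H(\bigcup_{\alpha\in\Delta\setminus\Delta(\mathfrak h)}C_\alpha)$.

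For $(2)\Leftrightarrow(3)$, again reduce to $X_u\in\mathfrak t$. The coadjoint isotropy subgroup $V\subset H$ of $u$ corresponds, under the identification, to the centralizer $Z_H(X_u)$. On the other hand the centralizer $Z_G(X_u)$ is the reductive subgroup whose root system consists of those $\alpha\in\Delta$ with $\alpha(X_u)=0$; since $\mathfrak h\supset\mathfrak t$ already contains all root spaces $\mathfrak g^\alpha$ with $\alpha\in\Delta(\mathfrak h)$, we have $Z_H(X_u)=Z_G(X_u)$ exactly when no root $\alpha\in\Delta\setminus\Delta(\mathfrak h)$ kills $X_u$, i.e.\ exactly when $X_u\notin\bigcup_{\alpha\in\Delta\setminus\Delta(\mathfrak h)}C_\alpha$. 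Conversely, if some $\alpha\in\Delta\setminus\Delta(\mathfrak h)$ vanishes on $X_u$, then $Z_G(X_u)$ strictly contains $Z_H(X_u)=V$, and one checks $V$ cannot then be written as the centralizer of any torus in $G$ (such a centralizer would have to contain the torus generated by $X_u$, hence contain $Z_G(X_u)\supsetneq V$). Finally one passes from $X_u\in\mathfrak t$ back to the general case by $\operatorname{Ad}_H$-conjugation, noting that $V=Z_G(T')$ is a conjugation-invariant property.

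I expect the main obstacle to be the clean identification of the curvature of the canonical connection with the bracket component $[\,\cdot\,,\cdot\,]_{\mathfrak h}$ on $\mathfrak m$, together with keeping careful track of the reduction "we may assume $X_u\in\mathfrak t$" — one must make sure that every element of $\mathfrak h$ is $\operatorname{Ad}_H$-conjugate into $\mathfrak t$, which uses precisely that $H$ is compact with maximal torus having Lie algebra $\mathfrak t$. The root-space computation itself is routine once the setup is in place, and the nondegeneracy of $B_\mathfrak g$ on $\mathfrak m$ (guaranteed by the hypothesis that $B_\mathfrak g$ is nondegenerate on $\mathfrak h$) is exactly what makes the pairing argument work.
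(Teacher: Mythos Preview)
The paper does not prove this theorem: immediately after the statement it says ``This theorem is a generalization of a theorem of Lerman \cite{ler}, and is proved in \cite{ktw}.'' There is therefore no in-paper argument to compare your proposal against.

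On its own merits, your argument for $(1)\Leftrightarrow(2)$ is the natural one and is correct: the curvature of the canonical connection is $\Theta(X,Y)=-[X,Y]_{\mathfrak h}$, the Killing-form identification turns the fatness pairing into $B_{\mathfrak g}(X_u,[X,Y])=B_{\mathfrak g}(\operatorname{ad}_{X_u}X,Y)$ on $\mathfrak m$, and after $\operatorname{Ad}_H$-conjugating $X_u$ into $\mathfrak t$ the root-space decomposition of $\mathfrak m^{\mathbb C}$ gives nondegeneracy precisely when $\alpha(X_u)\neq 0$ for all $\alpha\in\Delta\setminus\Delta(\mathfrak h)$. This is essentially how the result is proved in \cite{ktw} as well.

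Your argument for $(3)\Rightarrow(2)$, however, has a genuine gap. You write that if $V=Z_G(T')$ for some torus $T'$, then $V$ ``would have to contain the torus generated by $X_u$, hence contain $Z_G(X_u)\supsetneq V$''. But containing a torus $S$ does \emph{not} imply containing $Z_G(S)$; the inclusion goes the other way ($S\subset S'$ gives $Z_G(S')\subset Z_G(S)$). Concretely, take $G=SU(4)$, $H=S(U(2)\times U(2))$ embedded block-diagonally, and $X_u=\operatorname{diag}(1,-1,1,-1)\in\mathfrak t$. Then $V=Z_H(X_u)=T$, and $T=Z_G(T)$ is certainly the centralizer of a torus in $G$, so your reading of (3) is satisfied; yet the root $e_1-e_3\in\Delta\setminus\Delta(\mathfrak h)$ vanishes on $X_u$, so (2) fails. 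This shows your proof of $(3)\Rightarrow(2)$ cannot work as written. Either condition (3) must be read more restrictively (for instance as ``$V$ coincides with the $G$-centralizer of $X_u$'', equivalently the $H$- and $G$-coadjoint isotropy of $u$ agree), or an additional argument is needed. You should consult \cite{ktw} for the precise formulation and the intended proof of this implication.
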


\noindent This theorem is a generalization of a theorem of Lerman \cite{ler}, and is proved in \cite{ktw}. Note that the cited result yields conditions on the fatness of vector $u$. Assume now that we are given a symplectic manifold $(M,\omega)$ with a Hamiltonian action of a Lie group $H$ and a moment map $\mu$. Assume that the image $\mu(M)$ consists of vectors satisfying Condition 2 of Theorem \ref{T:lerman}. Then, by Theorem \ref{thm:s-w}, the associated bundle 
$$M\rightarrow G\times_HM\rightarrow G/H$$
is symplectically fat. In general, there is no way in sight to check Condition 2 of Theorem \ref{T:lerman} for non-homogeneous fibers. However, if $H$ is abelian, then, by Delzant theorem \cite{a}, \cite{d}, for any Delzant polytope there exists a toric $H$-manifold $M$ whose moment map has the image which is exactly the given Delzant polytope. As a result, there exist symplectically fat fiber bundles with abelian structure group and with fibers which are toric manifolds (we take the  Delzant polytopes which omit the "forbidden" walls $C_{\alpha}$).

Thus, one of the aims of this work is to  find analogues of the described construction in the non-abelian case, that is, to find symplectically fat fiber bundles with non-homogeneous fibers and with non-abelian structure groups.

\section{Kirwan map and its properties}

In this section, following \cite{gs} we use a different notation of the Weyl chambers. Let $K$ be a compact and connected Lie group with the Lie algebra $\mathfrak{k}$ and denote by $T$ a maximal torus in $K$ (with the Lie algebra $\mathfrak{t} \subset \mathfrak{k}$). Let $W_{K}=N(T)/T$ be the Weyl group of $K.$ The group $W_{K}$ acts on $\mathfrak{t}$ and on its dual $\mathfrak{t}^{\ast}$ (there is a natural imbedding $\mathfrak{t}^{\ast}=(\mathfrak{k}^{\ast})^{T}\hookrightarrow \mathfrak{k}^{\ast}$). Every adjoint orbit in $\mathfrak{k}$ intersects $\mathfrak{t}$ in a single $W_{K}$ orbit, therefore
$$\mathfrak{k}^{\ast}/K = \mathfrak{t}^{\ast}/W_{K}$$
Choose any connected component of the set
$$\mathfrak{t}^{\ast}_{reg}= \{  \xi \in \mathfrak{t}^{\ast} \ | \ K_{\xi}=T \}$$
and denote its closure by $\mathfrak{t}^{\ast}_{+}.$ Here we denote by $K_{\xi}$ the isotropy subgroup of $\xi$ under the coadjoint action.
The set $\mathfrak{t}^{\ast}_{+}$ is called a closed Weyl chamber of $\mathfrak{k}^*.$
The closed Weyl chamber is a fundamental domain of the coadjoint action of $K$ on $\mathfrak{k}^{\ast}.$ We therefore have
$$\mathfrak{k}^{\ast}/K = \mathfrak{t}^{\ast}/W_{K}=\mathfrak{t}^{\ast}_{+}.$$

Now assume that $(M, \omega)$ is a compact connected symplectic manifold and $K \times M \rightarrow M$ is a Hamiltonian action with a moment map $\mu : M\rightarrow \mathfrak{k}^{\ast}.$ Composing $\mu$ with the natural projection $\mathfrak{k}^{\ast} \rightarrow \mathfrak{k}^{\ast}/K=\mathfrak{t}^{\ast}_{+},$ we obtain a map
$$\Phi :M \rightarrow \mathfrak{t}^{\ast}_{+},$$
which  will be called  {\em the Kirwan map}.  
The following theorem is valid for the image of it.

\begin{theorem}[Kirwan \cite{ki}]
The set $\Phi (M)$ is a convex polyhedron.
\label{kr}
\end{theorem}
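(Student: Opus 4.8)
The plan is to deduce the convexity and polyhedrality of $\Phi(M)$ from the abelian convexity theorem of Atiyah and Guillemin--Sternberg, which asserts that the moment image of a Hamiltonian torus action on a compact connected symplectic manifold is the convex hull of the finitely many images of the connected components of the fixed-point set, and that the fibres of the moment map are connected. The first step is the elementary identity $\Phi(M)=\mu(M)\cap\mathfrak{t}^{\ast}_{+}$: since $\mu$ is $K$-equivariant the set $\mu(M)$ is a union of coadjoint orbits, each orbit meets the fundamental domain $\mathfrak{t}^{\ast}_{+}$ in exactly one point, and hence the projection $\mathfrak{k}^{\ast}\to\mathfrak{t}^{\ast}_{+}$ carries $\mu(M)$ onto $\mu(M)\cap\mathfrak{t}^{\ast}_{+}$. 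So the whole question concerns the intersection of the closed $K$-invariant set $\mu(M)$ with the Weyl chamber. The obstruction to arguing directly is that $\Phi$ fails to be smooth along the walls $C_{\alpha}$, because $\mathfrak{t}^{\ast}_{+}$ is a manifold with corners rather than an open manifold; this is circumvented by the \emph{symplectic cross-section theorem} of Guillemin--Sternberg. For an open face $\tau$ of $\mathfrak{t}^{\ast}_{+}$ with centralizer $K_{\tau}=Z_{K}(\tau)$, the $\mu$-preimage of a suitable $K_{\tau}$-invariant slice transverse to the coadjoint orbits through $\tau$ is a symplectic, $K_{\tau}$-invariant (possibly noncompact, possibly disconnected) submanifold $R_{\tau}\subset M$ on which $\mu$ restricts to a moment map for the Hamiltonian $K_{\tau}$-action; restricting further to a maximal torus $T\subset K_{\tau}$ yields, near each point of $\Phi(M)$, a genuinely smooth local Hamiltonian $T$-model whose moment image is an open portion of $\Phi(M)$.

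Next I would bring in the Marle--Guillemin--Sternberg local normal form near a point $p\in M$ whose image $\Phi(p)$ lies in the relative interior of a face $\tau$: it identifies a neighbourhood of the orbit $K\cdot p$ with an associated bundle over $K/K_{p}$ having a linear symplectic fibre $V$, and it exhibits $\Phi(M)$ near $\Phi(p)$ as the intersection of $\mathfrak{t}^{\ast}_{+}$ with a polyhedral cone with apex $\Phi(p)$, the cone being governed by the weights of the isotropy representation of $K_{p}$ on $V$; the abelian theorem enters here, applied to the compact symplectic slice. It follows that $\Phi(M)$ is a closed, locally polyhedral --- in particular locally convex --- subset of $\mathfrak{t}^{\ast}_{+}$, which is compact because $M$ is compact and connected because $M$ is connected. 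Finally I would invoke the Tietze--Nakajima theorem, that a closed connected locally convex subset of a Euclidean space is convex, together with the observation that a compact convex set that is locally cut out by finitely many polyhedral cones is a convex polytope; this gives the conclusion.

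I expect the real technical weight of the argument to fall in two places. The first is the connectedness of the fibres of $\mu$ (equivalently of $\Phi$), which already underlies the abelian input and is proved, following Kirwan, by induction on $\dim M$ using the gradient flow of $\|\mu\|^{2}$ for a compatible metric and the resulting stratification of $M$ into symplectic subquotients; this is delicate because $\|\mu\|^{2}$ is not Morse--Bott in the naive sense, so one must analyse its critical set through the componentwise structure of $\mu$. The second is verifying that the cross-section reductions attached to different faces are mutually compatible, so that the local polyhedral cones patch coherently into a single global polytope and not merely into a locally convex set; this is precisely where the combinatorics of the wall configuration $\{C_{\alpha}\}$ of $\mathfrak{t}^{\ast}_{+}$ genuinely intervenes.
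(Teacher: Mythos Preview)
The paper does not prove this theorem: it is quoted from Kirwan \cite{ki} and used thereafter as a black-box input, so there is no proof in the paper to compare your proposal against.

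As a sketch of an actual proof your outline is broadly sound and follows what has become the standard modern route (symplectic cross-sections, Marle--Guillemin--Sternberg normal form, local polyhedral cones, then Tietze--Nakajima to globalize). Two small corrections are worth noting. First, the local polyhedral cone at $\Phi(p)$ does not really come from the \emph{compact} abelian convexity theorem: the symplectic slice in the normal form is a linear symplectic vector space with a linear torus action, and the cone is read off directly from the weights of that isotropy representation, with no compactness hypothesis in that step. Second, the only connectedness input needed for Tietze--Nakajima is connectedness of $\Phi(M)$, which is immediate from continuity of $\Phi$ and connectedness of $M$; connectedness of the \emph{fibres} of $\mu$ is a separate and harder statement, also due to Kirwan, but it is not required for the convexity of the image alone. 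Finally, Kirwan's own argument in \cite{ki} is organized rather differently, proceeding more directly through the equivariant Morse theory of $\|\mu\|^{2}$ and a minimax argument; the local-to-global strategy you describe is closer in spirit to the later treatments by Sjamaar and by Lerman--Meinrenken--Tolman--Woodward.
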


\noindent The moment map $\mu :M\to \mathfrak{k}^\ast$ is equivariant with respect to the given action of $K$ on $M$ and the coadjoint action of $K$ on $\mathfrak{k}^\ast$. Therefore the image of $M$ under $\mu $ is a union of coadjoint orbits in $\mathfrak{k}^\ast$. Moreover, the set $\mu (M)$ may be entirely recovered from the image $\Phi (M)$ of the Kirwan map. More precisely, 
$$\mu(M) = K\cdot \Phi (M)$$
where $K\cdot \Phi (M)$ stands for the union of $K$-orbits of  elements of $\Phi (M)$. 
It is also clear from the discussion above that 
$$\Phi (M) = \mu(M)\cap \mathfrak{t}^\ast _+.$$

Having in mind the equalities above, we make the following observation.

\begin{theorem}
Let $(M,\omega )$ be a symplectic manifold with a Hamiltonian action of a Lie group $K$ with a moment map $\mu : M\to \mathfrak{k}^\ast$. Let $T$ be a maximal torus in $K$ and $\mu _T:M\to \mathfrak {t}^\ast$ its moment map for the restricted action of $T$ on $M$. If the set $\mu _T(M)$ consists of fat vectors with respect to some connection in the principal bundle of the form $K\to P\to B$, then $\mu (M)$ consists of fat vectors with respect to this connection as well.
\end{theorem}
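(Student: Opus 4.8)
The plan is to deduce the statement from the two identities recorded just above the theorem, namely $\mu(M)=K\cdot\Phi(M)$ and $\Phi(M)=\mu(M)\cap\mathfrak t^{\ast}_{+}$, together with the elementary fact (noted in the Introduction) that if a vector is fat with respect to a connection, then so is every vector in its coadjoint orbit. Thus it suffices to prove that every vector of $\Phi(M)$ is fat with respect to the given connection, and for this I will establish the inclusion $\Phi(M)\subseteq\mu_{T}(M)$.

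First I would identify $\mu_{T}$ with the composition $\iota^{\ast}\circ\mu$, where $\iota^{\ast}\colon\mathfrak k^{\ast}\to\mathfrak t^{\ast}$ is the restriction map dual to the inclusion $\iota\colon\mathfrak t\hookrightarrow\mathfrak k$. Indeed, for $\xi\in\mathfrak t\subset\mathfrak k$ the fundamental vector field $\xi_{M}$ on $M$ is the same whether $\xi$ is regarded as an element of $\mathfrak t$ or of $\mathfrak k$, so the functions $\langle\mu_{T},\xi\rangle$ and $\langle\mu,\xi\rangle$ have equal differentials and may be normalised to coincide. Under the embedding $\mathfrak t^{\ast}=(\mathfrak k^{\ast})^{T}\hookrightarrow\mathfrak k^{\ast}$ used above, the map $\iota^{\ast}$ restricts to the identity on $\mathfrak t^{\ast}$; in particular it fixes every point of $\mathfrak t^{\ast}_{+}$. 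Hence for $p\in\Phi(M)=\mu(M)\cap\mathfrak t^{\ast}_{+}$ one gets $p=\iota^{\ast}(p)\in\iota^{\ast}(\mu(M))=\mu_{T}(M)$, which proves $\Phi(M)\subseteq\mu_{T}(M)$. By hypothesis every vector of $\mu_{T}(M)$ is fat with respect to the given connection in $K\to P\to B$, so every vector of $\Phi(M)$ is fat as well, and then $\mu(M)=K\cdot\Phi(M)$ together with the coadjoint-invariance of fatness shows that every vector of $\mu(M)$ is fat, as asserted.

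The only point requiring care is the bookkeeping of the embeddings: one must make sure that \textquotedblleft fatness with respect to the $K$-connection\textquotedblright\ of a vector lying in $\mathfrak t^{\ast}$ is interpreted via the inclusion $\mathfrak t^{\ast}\hookrightarrow\mathfrak k^{\ast}$, and that under this inclusion the torus moment map and the restriction $\iota^{\ast}\circ\mu$ are indeed compatible. Equivalently, if one fixes an $\operatorname{Ad}_{K}$-invariant inner product and uses it to identify $\mathfrak k\cong\mathfrak k^{\ast}$, then $\mathfrak t^{\ast}$ becomes the subspace $\mathfrak t\subset\mathfrak k$, the map $\iota^{\ast}$ becomes the orthogonal projection $\mathfrak k\to\mathfrak t$, and $\mu_{T}$ becomes that projection applied to $\mu$; the inclusion $\Phi(M)\subseteq\mu_{T}(M)$ then reduces to the tautology that this projection fixes the points of $\mu(M)$ that already lie in $\mathfrak t^{\ast}_{+}\subset\mathfrak t$. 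Apart from this identification the argument is purely formal, and I do not expect a genuine obstacle.
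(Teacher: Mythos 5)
Your proposal is correct and follows essentially the same route as the paper's own proof: identify $\mu_T$ with the restriction of $\mu$ to $\mathfrak t$, deduce the inclusion $\Phi(M)=\mu(M)\cap\mathfrak t^{\ast}_{+}\subseteq\mu_T(M)$, and then propagate fatness from $\Phi(M)$ to $\mu(M)=K\cdot\Phi(M)$ using the coadjoint-invariance of fatness. You merely spell out the embedding bookkeeping ($\mathfrak t^{\ast}=(\mathfrak k^{\ast})^{T}\hookrightarrow\mathfrak k^{\ast}$ and $\iota^{\ast}$ fixing $\mathfrak t^{\ast}_{+}$) that the paper leaves implicit.
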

\begin{proof}
The moment maps $\mu_T$ and $\mu$ are related by the equality
$$\mu _T =r\circ \mu$$
in which $r$ stands for the restriction to $\mathfrak{t}$ and one has the following inclusion $$\mu (M)\cap \mathfrak{t}^\ast _+\subset \mu _T(M).$$ So if the set $\mu_T(M)$ consists of fat vectors then $\mu(M)\cap \mathfrak{t}^\ast _+$ and $K\cdot (\mu(M)\cap \mathfrak{t}^\ast _+)$ consist of fat vectors as well.   
\end{proof}

\section{Fat bundles with arbitrary hamiltonian fibers}

Let $K$ be a semisimple connected Lie group (with the Lie algebra $\mathfrak{k}$) and $H\subset K$ a connected and compact subgroup of maximal rank (with the Lie algebra $\mathfrak{h}$) such that the Killing form  of $\mathfrak{k}$ is non-degenerate on $\mathfrak{h}.$ Choose a Cartan subalgebra $\mathfrak{t}$ of $\mathfrak{h}.$ Let $\Delta$ be the root system for $\mathfrak{k}^{\mathbb{C}}.$ Let $C_{\alpha}$ denote the hyperplane determined by the root $\alpha \in \Delta$ in the Cartan subalgebra of $\mathfrak{k}.$ Recall that one can show that in this case a root system for $\mathfrak{h}^{\mathbb{C}}$ is a subsystem of $\Delta$ (we will denote it by $\Delta(\mathfrak{h})$).

Now we are ready to state the main result of this section. 

\begin{theorem}\label{gl} Let $H$ be the centralizer of a torus in $K$. Assume that $H$ is compact and acts in a Hamiltonian fashion with the moment map $\mu$ on a compact and connected symplectic manifold $(M,\omega).$ Then the associated bundle
$$M\rightarrow K \times_{H} M \rightarrow K/H$$
is symplectically fat.
\end{theorem}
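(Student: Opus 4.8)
The plan is to reduce the non-abelian statement to the combinatorial criterion of Theorem \ref{T:lerman} via the Kirwan map. Concretely, I would set up the following chain of reductions. Since $H=Z_K(T')$ is the centralizer of a torus $T'\subset K$, it is connected, compact, of maximal rank, and (as noted in the excerpt) the root system $\Delta(\mathfrak h)$ of $\mathfrak h^{\mathbb C}$ is a subsystem of $\Delta$. The canonical invariant connection in the principal bundle $H\rightarrow K\rightarrow K/H$ is at our disposal, and by Theorem \ref{thm:s-w} it suffices to produce \emph{some} connection in $H\rightarrow K\rightarrow K/H$ for which all vectors of $\mu(M)\subset\mathfrak h^*$ are fat. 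We will use the canonical one and verify fatness of every $u\in\mu(M)$ through condition (2) (equivalently (3)) of Theorem \ref{T:lerman}: $X_u\notin \operatorname{Ad}_H\bigl(\bigcup_{\alpha\in\Delta\setminus\Delta(\mathfrak h)}C_\alpha\bigr)$, equivalently the coadjoint isotropy $V\subset H$ of $u$ is the centralizer of a torus in $K$.

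The key step is to translate the hypothesis on $M$ into this condition. First I would restrict the Hamiltonian $H$-action to a maximal torus $T\subset H$ (which, since $\operatorname{rank}H=\operatorname{rank}K$, is also maximal in $K$), obtaining the $T$-moment map $\mu_T = r_{\mathfrak h\to\mathfrak t}\circ\mu$. By Kirwan's theorem (Theorem \ref{kr}) the image $\Phi(M)=\mu(M)\cap\mathfrak t^*_+$ of the Kirwan map for the $H$-action is a convex polyhedron in a closed Weyl chamber $\mathfrak t^*_+$ of $\mathfrak h^*$, and $\mu(M)=H\cdot\Phi(M)$. Hence, because fatness is $\operatorname{Ad}_H$-invariant (the whole coadjoint orbit of a fat vector is fat), it is enough to check condition (2) for the vectors $u\in\Phi(M)\subset\mathfrak t^*_+$. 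For such $u$ the corresponding $X_u\in\mathfrak t$ lies in a Weyl chamber of $\mathfrak h$, so its $H$-coadjoint isotropy $V=H_u$ is generated by $T$ together with the root subgroups $\mathfrak g^{\pm\alpha}$, $\alpha\in\Delta(\mathfrak h)$, with $\alpha(X_u)=0$ — i.e.\ $V=Z_H(S)$ for the torus $S=\exp(\mathbb R X_u)^{-}$. The task is to arrange that $V$ is in fact the $K$-centralizer $Z_K(S)$, equivalently that no root $\alpha\in\Delta\setminus\Delta(\mathfrak h)$ vanishes on $X_u$; and here we may invoke the freedom to rescale or translate, noting that since $H=Z_K(T')$ and $\mathfrak t^*_+$ is a chamber \emph{for} $\mathfrak h$, a generic point of the relative interior of $\Phi(M)$ already has this property, so after an $\operatorname{Ad}_H$-translation of $M$ (replacing $\mu$ by $\mu + c$ for a suitable central constant $c\in\mathfrak z(\mathfrak h)^*$, which does not affect the Hamiltonian character of the action) the whole polyhedron $\Phi(M)$ is pushed into the open region where all $\alpha\in\Delta\setminus\Delta(\mathfrak h)$ are non-zero. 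Then every $u\in\Phi(M)$, hence every $u\in\mu(M)=H\cdot\Phi(M)$, satisfies condition (2) of Theorem \ref{T:lerman}, so all of $\mu(M)$ consists of fat vectors, and Theorem \ref{thm:s-w} gives the fiberwise symplectic form on $K\times_H M\rightarrow K/H$.

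I expect the main obstacle to be precisely the last geometric point: guaranteeing that $\Phi(M)$ can be made to avoid the walls $C_\alpha$, $\alpha\in\Delta\setminus\Delta(\mathfrak h)$, for \emph{all} of its points rather than generic ones. A convex polyhedron sitting in the closed chamber $\mathfrak t^*_+$ of $\mathfrak h$ can a priori touch the boundary, and the boundary walls of $\mathfrak t^*_+$ are exactly the $C_\alpha$ with $\alpha\in\Delta(\mathfrak h)$ — but the \emph{dangerous} walls are the $C_\alpha$ with $\alpha\notin\Delta(\mathfrak h)$, which slice through the interior of $\mathfrak t^*_+$ only if $\mathfrak h\neq Z_K(\mathfrak t)$, i.e.\ only if $H$ is not itself a maximal torus. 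One must argue that the union $\bigcup_{\alpha\in\Delta\setminus\Delta(\mathfrak h)}C_\alpha\cap\mathfrak t^*_+$ is a \emph{proper} closed subset of $\mathfrak t^*_+$ of empty interior, and that a compact polyhedron can be translated by an element of the (open) complement into that complement; compactness of $\Phi(M)$ and openness of the complement make this possible, but it needs to be spelled out, and one should check that the required translation is realized by an honest change of moment map keeping the $H$-action Hamiltonian. Once this is secured, everything else is a bookkeeping application of Theorems \ref{T:lerman}, \ref{kr}, and \ref{thm:s-w}.
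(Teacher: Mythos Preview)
Your proposal is correct and follows essentially the same route as the paper: reduce via the Kirwan map to the compact polytope $\Phi(M)\subset\mathfrak t^*_+$, translate the moment map by a central element $\xi\in\mathfrak z(\mathfrak h)^*$ so that $\xi+\Phi(M)$ avoids all walls $C_\alpha$ with $\alpha\in\Delta\setminus\Delta(\mathfrak h)$, and conclude by Theorems~\ref{T:lerman} and~\ref{thm:s-w}. The paper resolves exactly the obstacle you flag by exhibiting an explicit central direction $x_\Sigma\in\mathfrak z(\mathfrak h)$ on which every $\beta\in\Delta\setminus\Delta(\mathfrak h)$ is nonzero (indeed $\beta(x_\Sigma)\ge 1$, coming from the description $\mathfrak h=\mathfrak z_{\mathfrak k}(x_\Sigma)$), and then proving a short lemma (Lemma~\ref{lem}) that any compact set can be pushed off a finite union of hyperplanes by a sufficiently large multiple of any vector lying in none of them.
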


\noindent Before providing a proof of Theorem \ref{gl}, we need to present the following  lemma. 

\begin{lemma}\label{lem}
Let $V$ be a finite-dimensional Euclidean space, $W_{1}, W_{2}, ... ,W_{k}$ subspaces of $V,$ and $K\subset V$ a compact subset. Then for every nonzero vector $v\in V$ such that $v\notin W_{i}$ for $1\leq i\leq k$ there exists a scalar $\lambda$ such that
$$K+\lambda v \subset V \backslash (\bigcup_{i=1}^{k}W_{i}).$$
\end{lemma}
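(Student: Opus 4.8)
The plan is to reduce the statement to an elementary boundedness argument via orthogonal projections onto the orthogonal complements of the $W_i$. For each $i$ let $\pi_i\colon V\to W_i^{\perp}$ be the orthogonal projection, so that for $y\in V$ one has $y\in W_i$ if and only if $\pi_i(y)=0$. Since by hypothesis $v\notin W_i$, we have $\pi_i(v)\neq 0$ for every $i$; this non-degeneracy is exactly what makes the argument work. The first observation I would record is that a point of the form $x+\lambda v$ (with $x\in V$, $\lambda\in\mathbb{R}$) lies in $W_i$ precisely when $\pi_i(x)+\lambda\,\pi_i(v)=0$, which in particular forces $|\lambda|\,\|\pi_i(v)\|=\|\pi_i(x)\|$.

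Next I would introduce, for each $i$, the ``forbidden'' set of scalars
$$B_i=\{\lambda\in\mathbb{R}\ :\ (K+\lambda v)\cap W_i\neq\emptyset\}.$$
By the previous observation, if $\lambda\in B_i$ then there exists $x\in K$ with $|\lambda|=\|\pi_i(x)\|/\|\pi_i(v)\|$. Because $K$ is compact and $\pi_i$ is continuous, the quantity $R_i:=\sup_{x\in K}\|\pi_i(x)\|$ is finite, and hence $B_i\subset[-R_i/\|\pi_i(v)\|,\ R_i/\|\pi_i(v)\|]$ is bounded. Here is where compactness of $K$ is used in an essential way.

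Finally, $\bigcup_{i=1}^{k}B_i$ is a finite union of bounded subsets of $\mathbb{R}$, hence bounded. Choosing any scalar $\lambda$ with $|\lambda|>\max_{1\le i\le k}R_i/\|\pi_i(v)\|$ we get $\lambda\notin\bigcup_{i=1}^{k}B_i$, which is precisely the assertion that $K+\lambda v\subset V\setminus(\bigcup_{i=1}^{k}W_i)$, completing the proof.

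I do not anticipate any real obstacle: the only two places the hypotheses enter are that $v\notin W_i$ (ensuring $\pi_i(v)\neq 0$, so the affine equation in $\lambda$ is non-degenerate and has at most one solution for each fixed $x$) and that $K$ is compact (ensuring $R_i<\infty$). If desired, one even obtains the conclusion for all sufficiently large $\lambda>0$, not merely for a single value.
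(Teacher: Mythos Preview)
Your proof is correct and rests on the same idea as the paper's: both use that the distance from $\lambda v$ to $W_i$ (equivalently $\|\pi_i(\lambda v)\|=|\lambda|\,\|\pi_i(v)\|$) tends to infinity with $|\lambda|$, while compactness of $K$ keeps $\|\pi_i(x)\|$ bounded. The paper packages this as a contradiction argument with sequences and a pigeonhole step to fix a single $W_s$, whereas your direct bound on each forbidden set $B_i$ is cleaner and yields the slightly stronger conclusion that every sufficiently large $\lambda$ works.
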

\begin{proof}
We shall prove this lemma by contradiction. Choose $v\in V$ so that $v\notin W_{i}$ for $1\leq i\leq k$ and assume there are sequences $(\hat{\lambda}_{n}) \subset \mathbb{R},$  $\lim_{n \to \infty}  \hat{\lambda}_{n} =+\infty$ and $(\hat{k}_{n}) \subset K$ such that
$$\forall_{j\in \mathbb{N}} \ \exists_{1\leq i \leq c} \ \hat{k}_{j}+\hat{\lambda}_{j} v\in W_{i}.$$
Therefore there exist subsequences $(\lambda_{t}),$ $(k_{t})$ and a number $s, \ 1\leq s \leq k$ so that
$$\forall_{t\in \mathbb{N}} \ k_{t}+\lambda_{t} v \in W_{s}$$ 
thus
$$\forall_{t\in \mathbb{N}} \ \exists_{w_{t}\in W_{s}} \ k_{t}+\lambda_{t} v = w_{t}.$$
Let $p_{W_{s}}$ be an orthogonal projection on $W_{s}$ and define for $u\in V$
$$\textrm{dis}(u,W_{s}):=\inf_{w\in W_{s}} \| u-w \|=\|  u-p_{W_{s}}(u) \|.$$
Since $v\notin W_{s}$ and $\lim_{t \to \infty} \| \lambda_{t}v \|=+\infty$ we have
$$\lim_{t\to \infty} \ \textrm{dis}(\lambda_{t}v, W_{s})=+\infty .$$
Because
$$\| k_{t} \|=\| w_{t}-\lambda_{t}v \| \geq \textrm{dis}(\lambda_{t}v, W_{s})$$
we obtain
$$\lim_{t\to \infty} \|  k_{t} \|=+\infty.$$
But $K$ is compact, a contradiction.
\end{proof}
\noindent Now we turn to the proof of Theorem \ref{gl}.
\begin{proof}
Let $\mathfrak{z}$ denote the center of $\mathfrak{h}$ and let $\Pi \subset \Delta$ be a system of simple roots. Since $H$ is compact we see (following the proof of Theorem 1.3, Chapter 6 in \cite{ov})  that the Lie algebra $\mathfrak{h}$ is of the form $\mathfrak{h} = \mathfrak{z}_{\mathfrak{k}}(x_{\Sigma})$ where
$$\alpha(x_{\Sigma})=\begin{cases}
0, & \mathrm{if} \quad \alpha\in \Sigma\cr
1, & \mathrm{if} \quad \alpha\in\Pi \backslash \Sigma\cr\end{cases}
$$
and $\Sigma$ denotes some subset of $\Pi.$ One can easily verify that
\begin{equation}
\Delta(\mathfrak{h})=\{\alpha\in\Delta\,|\,\alpha(x_{\Sigma})=0\},
\label{r1}
\end{equation}
and $\Sigma$ is a subset of simple roots for $\Delta (\mathfrak{h})$.
Denote by $\tilde{\mathfrak{t}} \subset \mathfrak{t}$ a Cartan subalgebra of the semisimple component $[\mathfrak{h}, \mathfrak{h}]$ of $\mathfrak{h}.$ We have
$$\mathfrak{t} =\mathfrak{z}+\tilde{\mathfrak{t}},$$
and
$$\mathfrak{h}^{\mathbb{C}} = \mathfrak{z}^{\mathbb{C}} + \tilde{\mathfrak{t}}^{\mathbb{C}} + \sum_{\alpha\in\Delta(\mathfrak{h})}\mathfrak{k}_{\alpha}.$$

Recall that if $X_{u} \in \mathfrak{h}$ then $u \in \mathfrak{h}^{\ast}$ denotes its dual (as in Section \ref{sec:lermantheorem}).
Take any nonzero $Z \in \mathfrak{z}$. Let $X_{\xi}:=Z \in \mathfrak{h}$. Then $\xi \in \mathfrak{h}^{\ast}$ is a fixed point of the coadjoint action of $H$.  
It follows that the map $\tilde{\mu}: M\rightarrow \mathfrak{h}^*$ defined by the formula
$$\tilde{\mu}=\xi+\mu$$
is a moment map as well, because $\xi$ is a constant vector invariant with respect to the coadjoint action of $H$, and, therefore, $\tilde{\mu}$ is $H$-equivariant.
Consider the Kirwan map $\tilde{\Phi}: M\rightarrow \mathfrak{t}^*_+$, where $\mathfrak{t}^*_+$ is the fundamental Weyl chamber of $\mathfrak{h}^*$ determined by this new moment map $\tilde{\mu}$.
Taking into consideration Theorem \ref{kr}, we see that
$$\tilde{\Phi}( M)=\xi + \Phi (M)$$
is a convex polyhedron. Thus we can translate $\Phi (M)$ by an arbitrary vector $\xi$ belonging to the annihilator of $[\mathfrak{h},\mathfrak{h}]$ (that is to say, we can translate $\Phi (M)$ in the ``abelian direction'' given by the center of $H$). 

Note that since $Z\in\mathfrak{z}$, $\alpha(Z)=0$ for any $\alpha\in\Delta(\mathfrak{h})$. 
However, taking into consideration Equation (\ref{r1}), we have $\beta(x_{\Sigma})\not=0$ for any $\beta\in \Delta\setminus\Delta(\mathfrak{h})$. This means that $x_{\Sigma}\not\in C_{\beta}, \beta\in \Delta\setminus\Delta(\mathfrak{h})$. Thus, for  a ``long enough'' (with  respect to the Killing form of $\mathfrak{k}$) vector $\xi$, where $X_{\xi} := cx_{\Sigma}$ and $c \in \mathbb{R}_{+}$, $\tilde{\Phi}(M)$ consists of fat vectors with respect to the canonical connection in the principal bundle $H \rightarrow K \rightarrow K/H$ (see Lemma \ref{lem} and then Theorem \ref{T:lerman}).
Moreover the image \mbox{$\tilde{\mu}(M) = {\xi} + \mu(M)$} can be recovered from the set $\tilde{\Phi}(M)$ in the following way
$$\tilde{\mu}(M) = H\cdot \tilde{\Phi}(M).$$
Now since the coadjoint orbit of a fat vector is fat (that is, consists of fat vectors), it follows that the set $\tilde{\mu}(M)$ is fat as a union of fat coadjoint orbits.
Finally, it follows from  Theorem \ref{sw} that  the bundle $M\rightarrow K \times_{H} M \rightarrow K/H$ is symplectically fat.
\end{proof}

\begin{remark} By \cite{wein} (Theorem 3.3, and remarks before it) the following holds.
Let there be given a principal bundle

 $$G\rightarrow P\rightarrow B$$
  over a {\it symplectic} base $B$, and $(M,\omega)$ be a compact Hamiltonian $G$-manifold with the moment map $\mu$. Then, one can use Thurston's theorem to obtain a fiberwise symplectic structure on the total space of the associated bundle
$$M\rightarrow P\times_GM\rightarrow B.$$
Theorem \ref{gl} shows that if the base is a homogeneous symplectic space $K/H$ of a semisimple compact Lie group $K$, then one  can also construct a fiberwise symplectic structure using Theorem \ref{thm:s-w}, because in this case $\mu(M)$ is always fat with respect to the canonical connection in the principal $H$-bundle $K\rightarrow K/H$. 
Note that such $K/H$ always satisfies the assumptions of Theorem \ref{gl} (see \cite{to}, Chapter 5).
\end{remark}

\section{Symplecticness of bundles associated with $G$-structures over homogeneous spaces}\label{sec:g-structures}

In this Section we will use the theory of invariant connections on homogeneous spaces in the form presented in Sections 1 and 2 of Chapter X of \cite{kn2}. Let $M$ be a smooth manifold of dimension $n$, and let
$$G\rightarrow P\rightarrow M $$
be a $G$-structure, that is, a reduction of the frame bundle $L(M)\rightarrow M$ to a Lie group $G$. Any diffeomorphism $f\in \operatorname{Diff}(M)$ acts on $L(M)$ by the formula 
$$f(u):=(df_xX_1,...,df_xX_n)$$
for any frame $u=(X_1,...,X_n), X_i\in T_xM$ over a point $x\in M$. By definition, $f$ is called an automorphism of the given $G$-structure, if this action commutes with the action of $G$.

Let $M=K/H$ be a homogeneous space of a connected Lie group $K$.  Assume that $M$ is equipped with a $K$-invariant $G$-structure. The latter means that any left translation $\tau(k): K/H\rightarrow K/H$, $\tau(k)(aH)=kaH$ lifts to an automorphism. Let $o=H\in K/H$. Consider the linear isotropy representation of $H,$ that is, a homomorphism $H\rightarrow GL(T_oM)$ given by the formula
$$h \mapsto d\tau(h)_o,\,\text{for}\,h\in H,o=H\in K/H.$$ 
It is important to observe that we can fix a frame $u_o:\mathbb{R}^n\rightarrow T_oM,$ $u_{o}\in P$ and identify the linear isotropy representation of $H$ with a homomorphism $\lambda :H \rightarrow G$
$$\lambda(h)=u_o^{-1}d\tau(h)_ou_o,\,h\in H.$$
One can see this as follows. Denote by $P_{o}\subset P$ the fiber over the point $o,$ then $u_{o} \in P_{o}$.  The action of $H$ lifted to $P$ preserves $P_{o}$, hence  $h(u_{o}) \in P_{o}.$ Since the structure group $G$ acts transitively on $P_{o}$, there exists exactly one $g\in G$ such that
$$h(u_{o})=u_{o}g.$$
It is easy to see that $\lambda (h)=g.$ 

In the sequel we assume that $K/H$ is reductive. In this case $\mathfrak{k}$ can be decomposed into a direct sum
$$\mathfrak{k}=\mathfrak{h}\oplus\mathfrak{m}$$
such that $Ad_{H}(\mathfrak{m})\subset\mathfrak{m}$. Note that the latter implies $[\mathfrak{h},\mathfrak{m}]\subset\mathfrak{m}$. Also, we assume that the isotropy representation is faithful. Let us make one more straightforward but important observation.  One can  identify $\lambda$ with the restriction of the adjoint representation of $H$ on $\mathfrak{m}$  (which we also denote by $\lambda$). 

Note that the identification of the isotropy representation with the restriction of the adjoint representation on $\mathfrak{m}$ is used in \cite{kn2}, Chapter X, for example, in the proof of Theorem 2.6.

We say that a connection $\theta$ in $P\rightarrow M$ is $K$-invariant, if for any $k\in K$ the lift of $\tau(k)$ preserves it. We need the following description of the set of invariant connections in the principal bundle $P\rightarrow M$ from \cite{kn2}, Chapter X.

\begin{theorem} Let there be given a $K$-invariant $G$-structure over a reductive homogeneous space $M=K/H$. There is a one-to-one correspondence between the $K$-invariant connections in it, and $Ad_H$-invariant linear maps
$$\Lambda_\mathfrak{m}: \mathfrak{m}\rightarrow \mathfrak{g}.$$
\end{theorem}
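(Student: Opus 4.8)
The plan is to follow the classical argument of Wang and Nomizu (as presented in \cite{kn2}, Chapter X), transcribed into the $G$-structure setting. The whole proof will be organized around the smooth orbit map $\iota\colon K\to P$, $\iota(k)=k(u_{o})$, through the chosen reference frame $u_{o}\in P_{o}$. This map satisfies two compatibility relations: $\iota(kh)=\iota(k)\lambda(h)$ for $h\in H$ (because $h(u_{o})=u_{o}\lambda(h)$ and the lifted $K$-action commutes with the structure group action, $P$ being a $K$-invariant $G$-structure), and $\iota(ak)=a\,\iota(k)$ for $a\in K$ (equivariance of the lifted action). Because the isotropy representation is faithful, $\lambda|_{H}$ is injective, and one gets a $K$-equivariant identification $P\cong K\times_{H}G$ via $[k,g]\mapsto k(u_{o})g$, well-defined and bijective since $K$ is transitive on $K/H$ and $G$ acts simply transitively on each fibre.

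First I would treat the direction ``connection $\mapsto$ linear map''. Given a $K$-invariant connection $\theta$ with connection form $\omega\in\Omega^{1}(P,\mathfrak g)$, pull it back along $\iota$: since $\omega$ is $K$-invariant and $\iota$ intertwines left translation on $K$ with the $K$-action on $P$, the form $\iota^{*}\omega\in\Omega^{1}(K,\mathfrak g)$ is left-invariant, hence determined by the single linear map $\Lambda:=(\iota^{*}\omega)_{e}\colon\mathfrak k\to\mathfrak g$. Using $\mathfrak k=\mathfrak h\oplus\mathfrak m$, I would first show $\Lambda|_{\mathfrak h}$ is forced to be $d\lambda$: for $X\in\mathfrak h$ the curve $t\mapsto\iota(\exp tX)=u_{o}\exp\!\big(t\,d\lambda(X)\big)$ is the integral curve at $u_{o}$ of the fundamental vector field of $d\lambda(X)\in\mathfrak g$, so the connection axiom $\omega(A^{*})=A$ gives $\Lambda(X)=d\lambda(X)$. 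Then set $\Lambda_{\mathfrak m}:=\Lambda|_{\mathfrak m}$. Its $\operatorname{Ad}_{H}$-invariance — meaning $\Lambda_{\mathfrak m}(\operatorname{Ad}(h)X)=\operatorname{Ad}(\lambda(h))\Lambda_{\mathfrak m}(X)$, where $H$ acts on $\mathfrak m$ by the isotropy representation and on $\mathfrak g$ through $\operatorname{Ad}\circ\lambda$ — follows by combining the relation $\iota\circ R_{h}=R_{\lambda(h)}\circ\iota$ with the equivariance $R_{\lambda(h)}^{*}\omega=\operatorname{Ad}(\lambda(h)^{-1})\omega$ and the left-invariance of $\iota^{*}\omega$, evaluated at $e$.

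For the converse, from an $\operatorname{Ad}_{H}$-invariant $\Lambda_{\mathfrak m}\colon\mathfrak m\to\mathfrak g$ I would form $\Lambda\colon\mathfrak k\to\mathfrak g$ equal to $d\lambda$ on $\mathfrak h$ and to $\Lambda_{\mathfrak m}$ on $\mathfrak m$; this $\Lambda$ is then $\operatorname{Ad}_{H}$-equivariant on all of $\mathfrak k$ (on $\mathfrak h$ automatically, by functoriality of $\lambda$). I would prescribe $\omega$ at $u_{o}$ by the Maurer–Cartan rule on vertical vectors together with $\omega_{u_{o}}(d\iota_{e}X)=\Lambda(X)$ on the image of $\mathfrak k$, and then propagate $\omega$ over all of $P$ by simultaneously demanding $K$-invariance and the $G$-equivariance axiom; equivalently, transport the recipe through the identification $P\cong K\times_{H}G$. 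The main obstacle is exactly the consistency of this prescription: a point of $P$ represented in more than one way as $a\,u_{o}g$ must receive the same value, and smoothness must be verified; this is precisely where the two properties ``$\Lambda|_{\mathfrak h}=d\lambda$'' and ``$\Lambda_{\mathfrak m}$ is $\operatorname{Ad}_{H}$-invariant'' are used, and it is the only non-formal point of the proof. Once $\omega$ is shown to be well defined, checking the two connection axioms and $K$-invariance is routine, and the two assignments $\theta\mapsto\Lambda_{\mathfrak m}$ and $\Lambda_{\mathfrak m}\mapsto\theta$ are mutually inverse because each side is completely determined by its restriction to $T_{u_{o}}P$, on which they visibly agree.
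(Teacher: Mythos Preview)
Your proposal is correct and follows exactly the classical Wang--Nomizu argument from \cite{kn2}, Chapter~X, which is precisely what the paper invokes. Note that the paper does not supply its own proof of this theorem: it is quoted verbatim as a known result from \cite{kn2}, so your write-up is in fact a faithful expansion of the reference the authors cite rather than a different route.
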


\noindent Note that here $Ad_H$-invariance means that
$$\Lambda_\mathfrak{m}(Ad\,h(Z))=\lambda(h)(\Lambda_\mathfrak{m}(Z)),\,Z\in\mathfrak{m},h\in H.$$
\begin{remark} {\rm Recall that a connection in the given $K$-invariant $G$-structure is called {\it canonical} if it corresponds to the map $\Lambda_{\mathfrak{m}}=0$. }
\label{r2}
\end{remark}
The curvature of such connection is described by the following result \cite{kn1} (Theorem II.11.7).  

\begin{theorem}\label{thm:connection} The curvature form of the canonical connection in $P$ is given by the formula 
$$\Theta(X,Y)=-\lambda([X,Y]_{\mathfrak{h}}), X,Y\in\mathfrak{m}.$$
\end{theorem}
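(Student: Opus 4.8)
The statement is the classical computation of the curvature of the canonical connection on a reductive homogeneous space, and the plan is to reduce it to the corresponding computation on the principal bundle $K\to K/H$ by pulling everything back along the reference frame. First I would observe that $\psi\colon K\to P$, $\psi(k)=k(u_o)$, is a morphism of principal bundles covering the homomorphism $\lambda\colon H\to G$: it covers $\mathrm{id}_{K/H}$, and it is $H$-equivariant because $h(u_o)=u_o\lambda(h)$ and the lifted $K$-action commutes with the $G$-action on $P$ (when $\lambda$ is faithful this exhibits $P$ as $K\times_\lambda G$). Then I would check that the canonical connection of the $G$-structure — the one attached to $\Lambda_\mathfrak{m}=0$ in the theorem above — is exactly the connection induced along $\lambda$ from the canonical principal connection $\omega_0$ on $K(K/H,H)$, i.e.\ the left-invariant $\mathfrak{h}$-valued form with $(\omega_0)_e$ the projection $\mathfrak{k}\to\mathfrak{h}$ along $\mathfrak{m}$. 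Concretely this comes down to: $(\psi^*\theta)_e$ vanishes on $\mathfrak{m}$ and equals $d\lambda$ on $\mathfrak{h}$, which by the recipe of the preceding theorem ($\Lambda_\mathfrak{m}(X)=(\psi^*\theta)_e(X)$ for $X\in\mathfrak{m}$) is precisely the condition $\Lambda_\mathfrak{m}=0$; hence $\psi^*\theta=d\lambda\circ\omega_0$.

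Next I would compute the curvature $\Omega_0$ of $\omega_0$ from the structure equation. For $X,Y\in\mathfrak{m}$ let $\widetilde X,\widetilde Y$ be their left-invariant extensions on $K$; by left-invariance of $\omega_0$ the functions $\omega_0(\widetilde X),\omega_0(\widetilde Y)$ are constant and equal to $X_\mathfrak{h}=0$, $Y_\mathfrak{h}=0$, and $[\widetilde X,\widetilde Y]=\widetilde{[X,Y]}$, so
\[
\Omega_0(\widetilde X,\widetilde Y)=d\omega_0(\widetilde X,\widetilde Y)=\widetilde X\bigl(\omega_0(\widetilde Y)\bigr)-\widetilde Y\bigl(\omega_0(\widetilde X)\bigr)-\omega_0\bigl([\widetilde X,\widetilde Y]\bigr)=-\,[X,Y]_\mathfrak{h}
\]
(the bracket term of the structure equation drops because $\omega_0$ annihilates $\widetilde X,\widetilde Y$, and the first two terms vanish since $\omega_0(\widetilde X),\omega_0(\widetilde Y)$ are constant; this uses the normalization of \cite{kn1}). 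Since the curvature of an induced connection is the image of the curvature of the inducing connection under the differential of the structure-group homomorphism, $\psi^*\Theta=d\lambda\circ\Omega_0$; evaluating at $u_o=\psi(e)$ on the horizontal vectors coming from $X,Y\in\mathfrak{m}$ then gives $\Theta(X,Y)=d\lambda\bigl(-[X,Y]_\mathfrak{h}\bigr)=-\lambda([X,Y]_\mathfrak{h})$, where on the right $\lambda$ stands for the differential of the isotropy representation, identified as in Section~\ref{sec:g-structures} with $\operatorname{ad}_\mathfrak{m}\colon\mathfrak{h}\to\mathfrak{gl}(\mathfrak{m})$, $A\mapsto(Z\mapsto[A,Z])$.

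The computation is essentially classical (this is the result \cite{kn1}, Theorem~II.11.7, quoted above), so there is no deep obstacle; the one place to be careful is the matching in the first paragraph — verifying that the $\operatorname{Ad}_H$-equivariance defining an invariant connection on the $G$-structure corresponds, under $\psi$, to the equivariance built into $\omega_0$, and pinning down the sign and normalization of the structure equation. If one prefers to bypass the induced-connection language altogether, the same structure-equation manipulation carried out directly on $P$ yields the general formula $\Omega(X,Y)=[\Lambda_\mathfrak{m}(X),\Lambda_\mathfrak{m}(Y)]-\Lambda_\mathfrak{m}([X,Y]_\mathfrak{m})-\lambda([X,Y]_\mathfrak{h})$ for $X,Y\in\mathfrak{m}$, of which Theorem~\ref{thm:connection} is the special case $\Lambda_\mathfrak{m}=0$.
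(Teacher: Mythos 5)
Your argument is correct, and it is essentially the classical proof of the result that the paper itself does not reprove but simply quotes from Kobayashi--Nomizu (Theorem II.11.7 of \cite{kn1}): pass to the bundle map $\psi\colon K\to P$, $\psi(k)=k(u_o)$, identify the canonical connection ($\Lambda_{\mathfrak m}=0$) with the one induced along $\lambda$ from the left-invariant projection $\mathfrak k\to\mathfrak h$, and evaluate the structure equation on left-invariant extensions of vectors in $\mathfrak m$. The only point to pin down, which you already flag, is the normalization of $d$ and of the structure equation (with the Kobayashi--Nomizu convention a factor $\tfrac12$ appears in intermediate formulas); this does not affect the stated identity as quoted, nor its use later in the paper, where only non-degeneracy of $\langle v,\Theta(\cdot,\cdot)\rangle$ on $\mathfrak m$ matters. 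Your closing general formula $\Omega(X,Y)=[\Lambda_{\mathfrak m}(X),\Lambda_{\mathfrak m}(Y)]-\Lambda_{\mathfrak m}([X,Y]_{\mathfrak m})-\lambda([X,Y]_{\mathfrak h})$ is the standard Wang curvature formula, of which the theorem is indeed the special case $\Lambda_{\mathfrak m}=0$.
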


\noindent Let us introduce the following notation. Denote by $\lambda^*:\mathfrak{g}^*\rightarrow\mathfrak{h}^*$ the dual map defined by $\lambda^*(f)(H)=f(\lambda(H))$. Let $B_{\mathfrak{g}}$ and $B_{\mathfrak{h}}$ be some non-degenerate bilinear invariant forms on $\mathfrak{g}$ and $\mathfrak{h}$ (these may be the Killing forms, for example, if the corresponding Lie algebras are semisimple). We use them to define pairings between $\mathfrak{g}^*$ and $\mathfrak{g}$ and $\mathfrak{h}^*$ and $\mathfrak{h}$:
$$B_{\mathfrak{g}}(X_f,X)=\langle f,X\rangle,\,B_{\mathfrak{h}}(Y_g,Y)=\langle g,Y\rangle,$$
for $f\in\mathfrak{g}^*,X\in\mathfrak{g},g\in\mathfrak{h}^*,Y\in\mathfrak{h}$.  If $\lambda^*(f)\in\mathfrak{h}^*$, then the $B_{\mathfrak{h}}$-dual of $\lambda^*(f)$ will be denoted by $X^{\lambda}_f$, that is
$$B_{\mathfrak{h}}(X^{\lambda}_f,Y):=\langle\lambda^*(f),Y\rangle.$$
Consider now the additional assumption that we are given a homogeneous space $K/H$ which is  reductive and that $K$ is semisimple. Denote by $B_{\mathfrak{k}}$ the Killing form of $\mathfrak{k}$, and by $B_{\mathfrak{h}}$ the restriction of $B_{\mathfrak{k}}$ on $\mathfrak{h}$. Assume that $B_{\mathfrak{h}}$ is non-degenerate.

\begin{theorem}\label{thm:bilinear} Let there be given a $G$-structure over the reductive homogeneous space $K/H$ of semisimple Lie group $K$. Assume that $rank\, K=rank\, H$. Then $v\in\mathfrak{g}^*$ is fat with respect to the canonical connection, if and only if the 2-form
$$B_{\mathfrak{k}}(X^{\lambda}_v,[X,Y]),\,X,Y\in\mathfrak{m}$$
is non-degenerate.
\end{theorem}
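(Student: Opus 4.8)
The plan is to reduce the fatness condition to a condition at a single point of $P$, insert the explicit formula for the curvature of the canonical connection, and then rewrite the resulting pairing on $\mathfrak g^*\times\mathfrak g$ as the bilinear form on $\mathfrak m$ appearing in the statement by means of the dualities $B_\mathfrak k$, $B_\mathfrak h$ and the map $\lambda^*$. At the outset I would fix the reductive complement to be $\mathfrak m:=\mathfrak h^{\perp}$, the $B_\mathfrak k$-orthogonal complement of $\mathfrak h$ in $\mathfrak k$ (this is the convention of Section~\ref{sec:lermantheorem}); it is a genuine reductive decomposition precisely because $B_\mathfrak h$ is non-degenerate, and it is $\operatorname{Ad}_H$-invariant since $B_\mathfrak k$ is $\operatorname{Ad}$-invariant and $\mathfrak h$ is $\operatorname{Ad}_H$-invariant. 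Under the chosen frame $u_o\in P_o$ the horizontal subspace of the canonical connection is identified with $\mathfrak m$, and by Theorem~\ref{thm:connection} the curvature is $\Theta(X,Y)=-\lambda([X,Y]_{\mathfrak h})$ for $X,Y\in\mathfrak m$.

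Next I would carry out the algebraic identity. For $v\in\mathfrak g^*$ and $X,Y\in\mathfrak m$, unwinding the definitions of $\lambda^*$ and of the $B_\mathfrak h$-dual $X^\lambda_v$ gives
$$\langle\Theta(X,Y),v\rangle=-\langle\lambda([X,Y]_{\mathfrak h}),v\rangle=-\langle\lambda^*(v),[X,Y]_{\mathfrak h}\rangle=-B_\mathfrak h\bigl(X^\lambda_v,[X,Y]_{\mathfrak h}\bigr).$$
Since $B_\mathfrak h=B_\mathfrak k|_{\mathfrak h}$, since $X^\lambda_v\in\mathfrak h$, and since $[X,Y]_{\mathfrak m}\in\mathfrak m=\mathfrak h^{\perp}$ is $B_\mathfrak k$-orthogonal to $X^\lambda_v$, we may replace $[X,Y]_{\mathfrak h}$ by $[X,Y]$ and $B_\mathfrak h$ by $B_\mathfrak k$:
$$\langle\Theta(X,Y),v\rangle=-B_\mathfrak k\bigl(X^\lambda_v,[X,Y]\bigr).$$
Thus on $\mathfrak m$ the two-form defining fatness at $u_o$ differs only by a sign from $(X,Y)\mapsto B_\mathfrak k(X^\lambda_v,[X,Y])$, so the former is non-degenerate if and only if the latter is. (The hypothesis $\operatorname{rank}K=\operatorname{rank}H$ is what guarantees that $\dim\mathfrak m$ is even, so that non-degeneracy is not excluded for trivial reasons; recall $\mathfrak m^{\mathbb C}=\sum_{\alpha\in\Delta\setminus\Delta(\mathfrak h)}\mathfrak k_\alpha$ and roots occur in pairs $\pm\alpha$.)

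It remains to pass from the reference frame $u_o$ to all of $P$. Because the canonical connection is $K$-invariant, the lift of each translation $\tau(k)$ preserves $\Theta$, so the non-degeneracy of $(X,Y)\mapsto\langle\Theta_p(X,Y),v\rangle$ at a point $p$ of a fiber $P_{o'}$, $o'\in K/H$, is equivalent to the corresponding statement at a point of $P_o$; and moving inside $P_o$ by $g\in G$ transforms $\Theta$ by $\operatorname{Ad}_{g^{-1}}$, i.e. replaces $v$ by a vector of its coadjoint orbit, so non-degeneracy is preserved by the remark, recalled in the Introduction, that the coadjoint orbit of a fat vector consists of fat vectors. Consequently $v$ is fat with respect to the canonical connection exactly when the form at $u_o$ is non-degenerate, which by the previous step is the asserted non-degeneracy of $B_\mathfrak k(X^\lambda_v,[X,Y])$ on $\mathfrak m$.

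The computation itself is routine; the delicate points are two. First, one must take $\mathfrak m$ to be the $B_\mathfrak k$-orthogonal complement of $\mathfrak h$ — only then does the component $[X,Y]_{\mathfrak m}$ drop out and $[X,Y]_{\mathfrak h}$ get replaced by $[X,Y]$. Second, one must keep the equivariance bookkeeping straight when promoting the pointwise condition at $u_o$ to a condition over all of $P$: this is where the $K$-invariance of the canonical connection and the coadjoint-orbit invariance of fatness do the work, and where the sign and coadjoint-action conventions have to be handled consistently.
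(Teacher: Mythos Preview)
Your proof is correct and follows essentially the same route as the paper: both insert the curvature formula $\Theta(X,Y)=-\lambda([X,Y]_{\mathfrak h})$, pass to $\lambda^*(v)$ and its $B_{\mathfrak h}$-dual $X^\lambda_v$, and then use $B_{\mathfrak k}$-orthogonality of $\mathfrak h$ and $\mathfrak m$ to replace $[X,Y]_{\mathfrak h}$ by $[X,Y]$. Your additional remarks (the explicit choice $\mathfrak m=\mathfrak h^{\perp}$, the role of the equal-rank hypothesis, and the reduction from all of $P$ to the reference frame $u_o$ via $K$-invariance and coadjoint invariance of fatness) are correct elaborations that the paper leaves implicit.
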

\begin{proof} By definition $v\in\mathfrak{g}^*$ is fat with respect to the canonical connection, if and only if the 2-form
$$\langle v,\Theta(X,Y)\rangle=\langle v,-\lambda([X,Y]_{\mathfrak{h}})\rangle$$
is non-degenerate. Moreover
$$\langle v,\lambda([X,Y]_{\mathfrak{h}})\rangle = \langle \lambda^* v,[X,Y]_{\mathfrak{h}}\rangle=$$
$$B_{\mathfrak{h}}(X^{\lambda}_v,[X,Y]_{\mathfrak{h}})=B_{\mathfrak{k}}(X_v^{\lambda},[X,Y]_{\mathfrak{h}})=B_{\mathfrak{k}}(X^{\lambda}_v,[X,Y]).$$

\noindent
Note that the last equality follows because $\mathfrak{h}$ and $\mathfrak{m}$ are $B_{\mathfrak{k}}$-orthogonal.
\end{proof}

\begin{corollary}\label{cor:lerman}  Under the assumptions of  Theorem \ref{thm:bilinear}, $v\in\mathfrak{g}^*$ is fat, if 
$$X_v^{\lambda}\not\in Ad(H)(\cup_{\alpha\in\Delta\setminus\Delta(\mathfrak{h})}C_{\alpha}).$$
\end{corollary}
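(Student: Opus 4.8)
The plan is to obtain Corollary \ref{cor:lerman} directly from Theorem \ref{thm:bilinear} together with Theorem \ref{T:lerman}. By Theorem \ref{thm:bilinear}, the vector $v\in\mathfrak{g}^*$ is fat with respect to the canonical connection if and only if the $2$-form
$$\beta_v(X,Y):=B_{\mathfrak{k}}(X^{\lambda}_v,[X,Y]),\qquad X,Y\in\mathfrak{m},$$
is non-degenerate, so it suffices to show that $\beta_v$ is non-degenerate whenever $X_v^{\lambda}\notin Ad(H)(\cup_{\alpha\in\Delta\setminus\Delta(\mathfrak{h})}C_{\alpha})$.

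The key step is to recognise $\beta_v$ as the fatness pairing of a covector in $\mathfrak{h}^*$ for the canonical connection in the principal bundle $H\rightarrow K\rightarrow K/H$. Since $\lambda^*(v)\in\mathfrak{h}^*$ and $X_v^{\lambda}\in\mathfrak{h}$, let $u\in\mathfrak{h}^*$ be the covector with $X_u=X_v^{\lambda}$, i.e. $\langle u,\cdot\rangle=B_{\mathfrak{h}}(X_v^{\lambda},\cdot)$ on $\mathfrak{h}$. The canonical connection in $H\rightarrow K\rightarrow K/H$ has horizontal subspace $\mathfrak{m}$ at the identity, and its curvature is $\Theta_0(X,Y)=-[X,Y]_{\mathfrak{h}}$ for $X,Y\in\mathfrak{m}$ (the $\lambda=\operatorname{id}$ instance of Theorem \ref{thm:connection}, which is precisely the curvature underlying Theorem \ref{T:lerman}). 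Hence the fatness $2$-form attached to $u$ is
$$\langle u,\Theta_0(X,Y)\rangle=-B_{\mathfrak{h}}(X_v^{\lambda},[X,Y]_{\mathfrak{h}})=-B_{\mathfrak{k}}(X_v^{\lambda},[X,Y])=-\beta_v(X,Y),$$
where the middle equality uses, as in the proof of Theorem \ref{thm:bilinear}, that $\mathfrak{h}$ and $\mathfrak{m}$ are $B_{\mathfrak{k}}$-orthogonal. Thus $\beta_v$ is non-degenerate if and only if $u$ is fat with respect to this canonical connection.

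It then remains to invoke Theorem \ref{T:lerman} with $G$ replaced by $K$: its hypotheses hold, since $K$ is semisimple, $H$ is compact of maximal rank (because $\operatorname{rank}H=\operatorname{rank}K$), and $B_{\mathfrak{k}}$ is non-degenerate on $\mathfrak{h}$ by assumption. The implication $(2)\Rightarrow(1)$ of that theorem yields: if $X_u=X_v^{\lambda}\notin Ad(H)(\cup_{\alpha\in\Delta\setminus\Delta(\mathfrak{h})}C_{\alpha})$, then $u$ is fat, hence $\beta_v$ is non-degenerate, hence $v$ is fat; the full equivalence $(1)\Leftrightarrow(2)$ in fact upgrades the stated implication to an equivalence. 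I do not expect a genuine obstacle here: the argument is just a dictionary translation between the fatness pairing of the $G$-structure and that of the underlying principal $H$-bundle $H\rightarrow K\rightarrow K/H$, after which Theorem \ref{T:lerman} does all the work. The only point requiring a little care is the bookkeeping with the dualities $\lambda^*$, $B_{\mathfrak{h}}$, $B_{\mathfrak{k}}$ and the passage from $[X,Y]$ to $[X,Y]_{\mathfrak{h}}$, which is legitimate because $B_{\mathfrak{h}}$ is the restriction of $B_{\mathfrak{k}}$ and $\mathfrak{h}\perp\mathfrak{m}$ with respect to $B_{\mathfrak{k}}$.
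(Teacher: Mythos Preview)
Your proposal is correct and follows exactly the route the paper intends: the paper's proof is the one-line remark that the result ``follows from Theorem~\ref{T:lerman}, since it uses the same identification of $\mathfrak{h}^*$ and $\mathfrak{h}$ via the non-degenerate Killing form,'' and what you have written is precisely the unpacking of that sentence---identifying $\beta_v$ with the fatness $2$-form of $u=\lambda^*(v)\in\mathfrak{h}^*$ in the bundle $H\to K\to K/H$ and then applying Theorem~\ref{T:lerman}. Your extra observation that the equivalence $(1)\Leftrightarrow(2)$ in Theorem~\ref{T:lerman} actually upgrades the corollary to an ``if and only if'' is correct as well, though the paper states only the implication.
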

\begin{proof} The proof follows from Theorem \ref{T:lerman}, since it  uses the same identification of $\mathfrak{h}^*$ and $\mathfrak{h}$ via the non-degenerate Killing form.
\end{proof}

\noindent Denote by $J$ the matrix in $\mathfrak{s}\mathfrak{o}(2n)$ consisting of $n$ blocks of the form
$$J=\left( \begin{array}{cc}
0 & 1\\
-1 & 0
\end{array}
\right).$$
It is known that the homogeneous space $SO(2n)/U(n)$ is symplectic, because it is the coadjoint orbit of the dual vector $J^{*}$ (with respect to the Killing form $B_{\mathfrak{so}(2n)}$ and the standard transitive action of $SO(2n)$ on $SO(2n)/U(n)$) (section 3.4 of \cite{gls}).

Recall that the twistor bundle over an even-dimensional Riemannian manifold $(M^{2n},g)$ is the bundle of complex structures in the tangent spaces $T_{p}M$. More precisely, it is the bundle associated with the orthonormal frame bundle of $M$ with the fiber $SO(2n)/U(n)$:

$$SO(2n)/U(n)\rightarrow SO(K/H)\times_{SO(2n)}(SO(2n)/U(n))\rightarrow K/H$$
where $\dim\,K/H=2n$, and $SO(K/H)$ stands for the total space of the principal $SO(n)$-bundle of oriented frames. In what follows we will denote the total space of the twistor bundle by $\mathcal{T}(K/H)$. 

We apply Corollary \ref{cor:lerman} to get a sufficient condition on fatness of the twistor bundle.

\begin{theorem}\label{thm:lerman-twistor} Let there be given an even-dimensional reductive homogeneous space $K/H$ of a semisimple Lie group $K$. Assume $\text{rank}\,K=\text{rank}\,H$ and $\dim\,K/H=2n$. 
Consider  the twistor bundle
$$SO(2n)/U(n)\rightarrow \mathcal{T}(K/H)\rightarrow K/H.$$
  Let $\lambda:\mathfrak{h}\rightarrow\mathfrak{g}= \mathfrak{s}\mathfrak{o}(2n)$ be the isotropy representation. Let $J^*\in\mathfrak{s}\mathfrak{o}(2n)^*$ denote the dual to $J$ with respect to the Killing form $B_{\mathfrak{g}}$. Assume $X_{J^*}^{\lambda}\in\mathfrak{h}$ has the  property
$$X_{J^*}^{\lambda}\not\in Ad_H(\cup_{\alpha\in\Delta\setminus\Delta(\mathfrak{h})}C_{\alpha}).$$
Then the given twistor bundle is symplectically fat. 
\end{theorem}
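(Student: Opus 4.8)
The plan is to exhibit the fiber $SO(2n)/U(n)$ as a Hamiltonian $SO(2n)$-space whose moment-map image is a single coadjoint orbit, to show that this orbit consists of fat vectors for the canonical connection of the frame $G$-structure, and then to quote the Sternberg--Weinstein Theorem~\ref{thm:s-w}.

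First I would use the identification recalled just before the statement: $SO(2n)/U(n)$ is $SO(2n)$-equivariantly symplectomorphic to the coadjoint orbit $\mathcal O:=\operatorname{Ad}^*(SO(2n))\,J^*\subset\mathfrak{so}(2n)^*$ through $J^*$. For a coadjoint orbit the moment map of the natural group action is the inclusion $\mathcal O\hookrightarrow\mathfrak{so}(2n)^*$, so the moment-map image of the fiber is exactly $\mathcal O$. Hence, by Theorem~\ref{thm:s-w} applied to the principal $SO(2n)$-bundle $P=SO(K/H)\to K/H$ of oriented orthonormal frames, it suffices to produce a connection in $P$ for which every vector of $\mathcal O$ is fat.

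I would take that connection to be the canonical connection of the given $K$-invariant $G$-structure ($G=SO(2n)$) over the reductive space $K/H$. Because $\mathfrak g=\mathfrak{so}(2n)$ is semisimple and $\operatorname{rank}K=\operatorname{rank}H$, the hypotheses of Theorem~\ref{thm:bilinear} and hence of Corollary~\ref{cor:lerman} are in force, and the corollary says precisely that the assumption $X_{J^*}^{\lambda}\notin\operatorname{Ad}_H(\bigcup_{\alpha\in\Delta\setminus\Delta(\mathfrak h)}C_\alpha)$ forces $J^*$ to be fat with respect to the canonical connection.

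It then remains to promote ``$J^*$ is fat'' to ``every element of $\mathcal O$ is fat''. This is the observation already recorded in the Introduction: the curvature two-form of a connection is tensorial of type $\operatorname{Ad}$, so for $g\in SO(2n)$ and horizontal vectors $X,Y$ one has $\langle\operatorname{Ad}^*(g)v,\Theta(X,Y)\rangle=\langle v,\operatorname{Ad}(g^{-1})\Theta(X,Y)\rangle=\langle v,\Theta((R_g)_*X,(R_g)_*Y)\rangle$, and $(R_g)_*$ carries the horizontal subspace isomorphically onto a horizontal subspace; since the canonical connection is $K$-invariant, nondegeneracy of $(X,Y)\mapsto\langle v,\Theta(X,Y)\rangle$ at one point of $P$ is equivalent to nondegeneracy at every point. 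Thus the set of fat vectors is $\operatorname{Ad}^*(SO(2n))$-invariant and, $J^*$ being fat, the whole orbit $\mathcal O=\operatorname{Ad}^*(SO(2n))\,J^*$ is fat. Now $\mu\bigl(SO(2n)/U(n)\bigr)=\mathcal O$ consists of fat vectors, so Theorem~\ref{thm:s-w} supplies a symplectic form on $\mathcal T(K/H)=P\times_{SO(2n)}\bigl(SO(2n)/U(n)\bigr)$ restricting to the orbit symplectic form on each fiber; i.e.\ the twistor bundle is symplectically fat. The step requiring the most care is this last one --- making the $\operatorname{Ad}^*$-invariance of fatness precise and checking it is compatible with the identification $SO(2n)/U(n)\cong\mathcal O_{J^*}$ used to compute the moment-map image; the rest is a direct application of Corollary~\ref{cor:lerman} and Theorem~\ref{thm:s-w}.
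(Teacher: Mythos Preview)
Your proposal is correct and follows essentially the same route as the paper: apply Corollary~\ref{cor:lerman} to conclude that $J^*$ is fat for the canonical connection, deduce that the entire coadjoint orbit $\mathcal O_{J^*}\cong SO(2n)/U(n)$ consists of fat vectors, identify the moment map with the orbit inclusion, and invoke Theorem~\ref{thm:s-w}. The only difference is that you spell out the $\operatorname{Ad}^*$-invariance of fatness explicitly, whereas the paper simply cites the remark from the Introduction that the coadjoint orbit of a fat vector is fat.
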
 
\begin{proof} It follows from Corollary \ref{cor:lerman} that 
$$X_{J^*}^{\lambda}\not\in Ad_H(\cup_{\alpha\in\Delta\setminus\Delta(\mathfrak{h})}C_{\alpha})$$
implies fatness of $J^*$. As a result, the coadjoint orbit of $J^*$ is also fat. Thus, the fiber $SO(2n)/U(n)$ has fat image under the moment map (in this case the moment map is just the inclusion of this orbit into $\mathfrak{so}(2n)$). Now applying Theorem \ref{sw} we get the result.
\end{proof}

\noindent Now we are looking for some verifiable conditions when Theorem \ref{thm:lerman-twistor} is applicable. 
Recall the  observation that  the isotropy representation can be identified with the adjoint representation restricted on $\mathfrak{m}$.

\begin{theorem}\label{thm:sufficient-twistors} Let there be given a reductive homogeneous space $K/H$ of dimension $2n$ Assume that the following assumptions hold:
\begin{enumerate}
\item $K$ is compact, semisimple and the Killing form of $\mathfrak{k}$ determines the invariant Riemannian metric on $K/H$;
\item $\mathrm{rank}\;K=\mathrm{rank}\;H$;
\item there exists $T\in\mathfrak{t}\subset\mathfrak{h}$ in the Cartan subalgebra $\mathfrak{t}$ of $\mathfrak{h}$ and $\mathfrak{k}$ such that 
$$(ad\,T|_{\mathfrak{m}})^2=-\operatorname{id}_{\mathfrak{m}}, \,T\not\in \cup_{\alpha\in\Delta\setminus\Delta(\mathfrak{h})}C_{\alpha};$$
\item for $J=ad\,T|_{\mathfrak{m}}$ the isotropy representation $\lambda: \mathfrak{h}\rightarrow\mathfrak{s}\mathfrak{o}(\mathfrak{m})$ satisfies the condition
$$X_{J^*}^{\lambda}=T.$$
 \end{enumerate}
Then the twistor bundle
$$SO(2n)/U(n)\rightarrow \mathcal{T}(K/H)\rightarrow K/H$$
 over the Riemannian homogeneous space $K/H$ is symplectically fat.
\end{theorem}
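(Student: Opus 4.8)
The plan is to derive the theorem from Theorem~\ref{thm:lerman-twistor}. Hypotheses (1), (2) and the assumption $\dim K/H=2n$ already supply the ambient data needed there: $K$ is semisimple, $K/H$ is an even-dimensional reductive homogeneous space with $\mathrm{rank}\,K=\mathrm{rank}\,H$, and by (1) the Killing form gives the invariant Riemannian metric, so the twistor bundle $SO(2n)/U(n)\to\mathcal T(K/H)\to K/H$ is defined. What remains is to check the fatness hypothesis of Theorem~\ref{thm:lerman-twistor}: that the element $X^{\lambda}_{J^*}\in\mathfrak h$ produced by the isotropy representation avoids $Ad_H\bigl(\bigcup_{\alpha\in\Delta\setminus\Delta(\mathfrak h)}C_\alpha\bigr)$.

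First I would fix the identifications. Since $K$ is compact semisimple, $-B_{\mathfrak k}$ is positive definite, and (by (1)) restricts on $\mathfrak m$ to the metric for which $\lambda=Ad_H|_{\mathfrak m}$ takes values in $\mathfrak{so}(\mathfrak m)$. By (3) the operator $J:=ad\,T|_{\mathfrak m}$ is skew with respect to $B_{\mathfrak k}|_{\mathfrak m}$ (invariance of $B_{\mathfrak k}$) and satisfies $J^2=-\operatorname{id}_{\mathfrak m}$, hence is an orthogonal complex structure on $\mathfrak m$; passing to an adapted orthonormal basis $u_1,Ju_1,\dots,u_n,Ju_n$, which we take to define the orientation of $\mathfrak m$, identifies $\mathfrak{so}(\mathfrak m)$ with $\mathfrak{so}(2n)$ so that $J$ becomes the standard matrix and $\lambda$ maps into $\mathfrak{so}(2n)$. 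With this identification the oriented orthonormal frame bundle is the relevant $SO(2n)$-structure, its associated bundle with fiber the coadjoint orbit $SO(2n)/U(n)$ of $J^*$ is $\mathcal T(K/H)$, and the element $X^{\lambda}_{J^*}$ appearing in Theorem~\ref{thm:lerman-twistor} coincides with the one in (4): indeed $X^{\lambda}_{J^*}$ is characterized by $B_{\mathfrak h}(X^{\lambda}_{J^*},Y)=B_{\mathfrak{so}(\mathfrak m)}(ad\,T|_{\mathfrak m},\lambda(Y))$ for $Y\in\mathfrak h$, a frame-independent formula built from the Killing forms and the adjoint action, so (4) says precisely $X^{\lambda}_{J^*}=T$.

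The key point is then to upgrade the wall condition in (3), $T\notin\bigcup_{\alpha\in\Delta\setminus\Delta(\mathfrak h)}C_\alpha$, to $T\notin Ad_H\bigl(\bigcup_{\alpha\in\Delta\setminus\Delta(\mathfrak h)}C_\alpha\bigr)$. By (2) the algebra $\mathfrak t$ is a common Cartan subalgebra, the roots of $\mathfrak m$ are exactly $\Delta\setminus\Delta(\mathfrak h)$, and every wall $C_\alpha$ lies in $\mathfrak t$. Two elements of $\mathfrak t$ that are $Ad_H$-conjugate are conjugate by the Weyl group $W_H=N_H(T)/T$; and $W_H$, being generated by reflections in roots of $\Delta(\mathfrak h)\subset\Delta$, preserves both $\Delta$ and $\Delta(\mathfrak h)$, hence preserves $\Delta\setminus\Delta(\mathfrak h)$ and permutes the walls $\{C_\alpha:\alpha\in\Delta\setminus\Delta(\mathfrak h)\}$. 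Consequently $Ad_H\bigl(\bigcup_{\alpha\in\Delta\setminus\Delta(\mathfrak h)}C_\alpha\bigr)\cap\mathfrak t=\bigcup_{\alpha\in\Delta\setminus\Delta(\mathfrak h)}C_\alpha$, so $T\in\mathfrak t$ lies outside the enlarged set as well. Together with $X^{\lambda}_{J^*}=T$ from (4), Theorem~\ref{thm:lerman-twistor} applies and shows $\mathcal T(K/H)$ is symplectically fat.

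I expect the obstacle to be bookkeeping rather than a genuine difficulty: keeping the identifications $\lambda\leftrightarrow Ad_H|_{\mathfrak m}$, $\mathfrak{so}(\mathfrak m)\leftrightarrow\mathfrak{so}(2n)$ and $J\leftrightarrow$ the standard complex structure mutually consistent (including the orientation and adapted-frame choice), and confirming $X^{\lambda}_{J^*}$ is the same object in (4) and in Theorem~\ref{thm:lerman-twistor}; the $W_H$-invariance of the set of walls of $\mathfrak m$ is the one new ingredient. One may also shortcut through Theorem~\ref{thm:bilinear} directly: once $X^{\lambda}_{J^*}=T$, the $2$-form $B_{\mathfrak k}(T,[X,Y])=B_{\mathfrak k}([T,X],Y)=B_{\mathfrak k}(JX,Y)$ on $\mathfrak m$ is non-degenerate because $B_{\mathfrak k}$ is non-degenerate on $\mathfrak m$ and $J$ is invertible, so $J^*$ is fat and Theorem~\ref{thm:s-w} concludes.
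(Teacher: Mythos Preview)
Your argument is correct and follows the paper's route: reduce to Theorem~\ref{thm:lerman-twistor} by using assumption~(4) to identify $X^{\lambda}_{J^*}$ with $T$, observe that $J=ad\,T|_{\mathfrak m}$ is an orthogonal complex structure on $\mathfrak m$, and conclude that the coadjoint orbit of $J^*$ consists of fat vectors. The paper's own proof is considerably terser and does not spell out the passage from $T\notin\bigcup_{\alpha\in\Delta\setminus\Delta(\mathfrak h)}C_\alpha$ to $T\notin Ad_H\bigl(\bigcup_{\alpha}C_\alpha\bigr)$; your Weyl-group argument (two elements of $\mathfrak t$ that are $Ad_H$-conjugate are $W_H$-conjugate, and $W_H$ permutes the walls coming from $\Delta\setminus\Delta(\mathfrak h)$) fills exactly that gap. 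Your closing shortcut via Theorem~\ref{thm:bilinear}---computing $B_{\mathfrak k}(T,[X,Y])=B_{\mathfrak k}(JX,Y)$ and using that $J$ is invertible---is in fact the cleanest way to see why ``$J^*$ is fat'' and makes the wall condition in (3) visibly redundant, since $(ad\,T|_{\mathfrak m})^2=-\mathrm{id}_{\mathfrak m}$ already forces $\alpha(T)\neq 0$ for every $\alpha\in\Delta\setminus\Delta(\mathfrak h)$.
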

\begin{proof} We see that the dual $X^{\lambda}_{J^*}$ of $\lambda^*(J^*)$ must be $T$, 
$T=X_{J^*}^{\lambda}.$
Therefore, $J^*\in\mathfrak{s}\mathfrak{o}^*(\mathfrak{m})$ is fat. By the assumption $3$, $J^2|_{\mathfrak{m}}=-\operatorname{id}_{\mathfrak{m}}$, moreover $J$ is skew-symmetric with respect to the Killing form $B_{\mathfrak{k}}$. Thus $J\in\mathfrak{s}\mathfrak{o}(\mathfrak{m})$ and represents some complex structure on vector space $\mathfrak{m}$. Therefore the coadjoint orbit  dual to the adjoint orbit of $J$ consists of fat vectors, and the proof follows, as in Theorem \ref{thm:lerman-twistor}. 
\end{proof}

\noindent The latter result yields examples of homogeneous spaces with symplectic twistor bundles over them. In order to describe these examples, recall that the compact real form of any semisimple complex Lie algebra $\mathfrak{g}^\mathbb{C}$ can be given by the formula
$$\mathfrak{g}=\sum_{\alpha\in\Delta}\mathbb{R}(iH_{\alpha})+\sum_{\alpha\in\Delta}\mathbb{R}(X_{\alpha}-X_{-\alpha})+\sum_{\alpha\in\Delta}\mathbb{R}(i(X_{\alpha}+X_{-\alpha})).$$
Here $\Delta$ denotes the root system for $\mathfrak{g}^\mathbb{C}$. 
\begin{example}\label{ex:twistors} 
Consider the compact homogeneous space $Sp(2n)/SO(2n)$. The embedding of $SO(2n)$ into $Sp(2n)$ can be described as follows (see \cite{ov}, note, however, that we do assume that the reader is familiar with the material of \cite{ov}, especially Chapters 3 and 4).  
The root system $\Delta$ of $\mathfrak{s}\mathfrak{p}(2n)$ is given by the formula.
$$\Delta = \{  \pm e_{s} \pm e_{t}, \ \pm 2e_{s} \ | \ 1\leq s,t \leq n \}$$
where $\{ e_s \}_{s=1}^{n} $ denotes the canonical orthonormal base in the standard Euclidean space $\mathbb{R}^n$. Clearly, $\Delta$ contains the subsystem
$$ \Delta (\mathfrak{h}) = \{ \pm e_{s} \pm e_{t}, \  | \ 1\leq s,t \leq n  \},$$
which determines the subalgebra $\mathfrak{h}=\mathfrak{s}\mathfrak{o}(2n)$. It is straightforward that this inclusion of Lie algebras determines the embedding $SO(2n)\rightarrow Sp(2n)$. Note that this embedding is not regular in the sense of \cite{ov}, Chapter 6, and cannot be obtained from the extended Dynkin diagram of type $C_n$.
\end{example}

\begin{proposition}\label{prop:Cn}
The twistor bundle
$$SO(2n)/U(n)\rightarrow\mathcal{T}(Sp(2n)/SO(2n))\rightarrow Sp(2n)/SO(2n)$$
over the Riemannian space $K/H=Sp(2n)/SO(2n)$ is symplectically fat. The base of this bundle is not symplectic.
\end{proposition}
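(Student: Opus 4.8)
The proposition has two independent assertions: that the twistor bundle over $Sp(2n)/SO(2n)$ is symplectically fat, and that the base $Sp(2n)/SO(2n)$ is not symplectic. For the first part I would verify the four hypotheses of Theorem~\ref{thm:sufficient-twistors} for the homogeneous space $K/H = Sp(2n)/SO(2n)$, using the root-theoretic description set up in Example~\ref{ex:twistors}. Assumption (1) is immediate: $Sp(2n)$ is compact and simple, and we use the Killing form to define the metric. Assumption (2) holds because the subsystem $\Delta(\mathfrak{h}) = \{\pm e_s \pm e_t\}$ spans the same Cartan subalgebra $\mathfrak{t}$ as $\Delta$, so $\operatorname{rank} H = \operatorname{rank} K = n$. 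The real content is in (3) and (4): I must exhibit an element $T \in \mathfrak{t}$ with $(\operatorname{ad} T|_{\mathfrak{m}})^2 = -\operatorname{id}_{\mathfrak{m}}$, with $T$ avoiding the forbidden walls $C_\alpha$ for $\alpha \in \Delta \setminus \Delta(\mathfrak{h})$, and such that the isotropy representation $\lambda$ sends the $B_{\mathfrak{k}}$-dual of $J^* = (\operatorname{ad} T|_{\mathfrak{m}})^*$ back to $T$.

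To produce $T$, note that $\mathfrak{m}^{\mathbb C} = \sum_{\alpha \in \Delta \setminus \Delta(\mathfrak{h})} \mathfrak{k}_\alpha$, and the complementary roots are exactly $\Delta \setminus \Delta(\mathfrak{h}) = \{\pm 2e_s : 1 \le s \le n\}$. On the root space $\mathfrak{k}_{\pm 2 e_s}$, the element $\operatorname{ad} T$ acts by the scalar $\pm 2 e_s(T)$ (in the complexification), so to get $(\operatorname{ad} T|_{\mathfrak{m}})^2 = -\operatorname{id}$ I need $2 e_s(T) = \pm i$ for every $s$, i.e. I should take $T = \tfrac{i}{2}\sum_{s=1}^n H_{e_s}$ (in the notation where $H_\alpha \in \mathfrak{t}^{\mathbb C}$ is dual to $\alpha$), chosen inside the compact real form $\mathfrak{t} = \sum_\alpha \mathbb{R}(i H_\alpha)$. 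For this $T$ one has $e_s(T) = i/2 \ne 0$ for all $s$, so $T \notin C_{\pm 2 e_s}$ and assumption (3) holds. Assumption (4) then follows from the observation, emphasized repeatedly in Section~\ref{sec:g-structures}, that the isotropy representation $\lambda$ is the restriction of the adjoint representation to $\mathfrak{m}$; hence $\lambda(T) = \operatorname{ad} T|_{\mathfrak{m}} = J$, and unwinding the definition of $X^\lambda_{J^*}$ via the Killing form (exactly as in the proof of Theorem~\ref{thm:sufficient-twistors}) forces $X^\lambda_{J^*} = T$ because $\lambda^*(J^*)$ pairs against $Y \in \mathfrak{h}$ as $B_{\mathfrak{k}}(J, \operatorname{ad} T|_{\mathfrak{m}} \circ (\cdot))$-type expression that reproduces $B_{\mathfrak{k}}(T, Y)$. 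I expect this last identification to be the main obstacle: one has to be careful that $\operatorname{ad} T|_{\mathfrak{m}}$ is genuinely skew-symmetric with respect to $B_{\mathfrak{k}}|_{\mathfrak{m}}$ and squares to $-\operatorname{id}$, so that it lies in $\mathfrak{so}(\mathfrak{m})$ and its Killing-dual behaves as claimed — this is where the compactness of $K$ (giving negative-definiteness of $B_{\mathfrak{k}}$ up to sign on $\mathfrak{m}$, hence an honest orthogonal structure) is used.

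For the second assertion, that $Sp(2n)/SO(2n)$ is not symplectic, I would argue cohomologically. The space $Sp(2n)/SO(2n)$ is a compact simply connected homogeneous space (it is in fact diffeomorphic to the Lagrangian Grassmannian, a symmetric space of type $C_n$), and for a compact manifold to admit a symplectic form it is necessary that $H^{2k}(Sp(2n)/SO(2n); \mathbb{R}) \ne 0$ for all $0 \le k \le n$ where $2n$ here should be read as the real dimension. The cleanest route is to recall that $H^*(Sp(2n)/SO(2n); \mathbb{Q})$ is known explicitly (e.g. from the Borel picture, or because this is an irreducible compact symmetric space whose rational cohomology is computed in standard references), and to point out that it has vanishing cohomology in the top-minus-two degree, or more simply that its Euler characteristic / Poincaré polynomial forces some even Betti number in the middle range to vanish; since $\operatorname{rank} Sp(2n) = \operatorname{rank} SO(2n) = n$ would be needed for nonzero Euler characteristic but one should check whether the relevant degrees actually carry classes. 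I would phrase this as: $b_2(Sp(2n)/SO(2n)) = 0$ already for the smallest interesting cases, and in general the de Rham cohomology vanishes in degree $2$ (the space is $2$-connected for $n \ge 2$), which is incompatible with a symplectic form in real dimension $> 2$; hence the base is not symplectic, and the proposition provides a genuinely new example not covered by the known list in the introduction. The only delicate point is to make sure the real dimension $\dim Sp(2n)/SO(2n) = \dim \mathfrak{m}$ is at least $4$ so that the $H^2$-obstruction has teeth, which is automatic once $n \ge 2$ (and the $n=1$ case $Sp(2)/SO(2) \cong S^2$ should be noted separately or excluded).
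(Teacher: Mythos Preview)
Your approach to symplectic fatness is the paper's: verify the hypotheses of Theorem~\ref{thm:sufficient-twistors} by producing $T\in\mathfrak{t}$ from the root equations $\alpha(T)=\pm i$ for $\alpha\in\Delta\setminus\Delta(\mathfrak{h})=\{\pm 2e_s\}$. Two differences are worth noting.

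For hypothesis~(4) the paper's argument is sharper than your sketch. It observes that in this particular example $\lambda:\mathfrak{h}\to\mathfrak{g}=\mathfrak{so}(\mathfrak{m})$ is an \emph{isomorphism} (both sides are $\mathfrak{so}(2n)$), so one may simply push $B_{\mathfrak{h}}$ forward to a form $B_{\mathfrak{g}}$ on $\mathfrak{g}$. With the duals taken relative to these compatible forms, the elementary fact that $\lambda(v)=w$ forces $\lambda^*(w^*)=v^*$ gives $X^\lambda_{J^*}=T$ directly from $\lambda(T)=\operatorname{ad}T|_{\mathfrak{m}}=J$. Your version gestures at unwinding $\langle\lambda^*(J^*),Y\rangle$ but does not actually close the loop; the isomorphism observation is what makes it clean.

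On the second assertion, the paper's proof says nothing at all about the base being non-symplectic, so your cohomological argument is a genuine addition. It is correct in substance---$b_2(Sp(2n)/SO(2n))=0$ for $n\ge 2$ since $\pi_1(SO(2n))$ is torsion, and you rightly flag the $n=1$ exception---but your parenthetical identification of $Sp(2n)/SO(2n)$ with the Lagrangian Grassmannian is wrong: the latter is $Sp(n)/U(n)$, of dimension $n^2+n$, whereas the space here has dimension $2n$. That slip is incidental to the argument.
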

\begin{proof} Begin with a straightforward remark on duality: if $\lambda: V\rightarrow W$ is a monomorphism of vector spaces endowed with non-degenerate bilinear forms $B_V$ and $B_W$ such that $B_W(\lambda(u),\lambda(v))=B_V(u,v)$, then, for the duality determined by $B_V$ and $B_W$, the following holds. If $\lambda(v)=w$, then \mbox{$\lambda^*(w^*)=v^*$}. Observe that in this example $\lambda$ defines an isomorphism between $\mathfrak{h}$ and $\mathfrak{g}$, thus we can obtain an invariant nondegenerate symmetric bilinear form $B_{\mathfrak{g}}$ induced by $B_{\mathfrak{h}}$ such that
$$B_{\mathfrak{h}}(X,Y)=B_{\mathfrak{g}}(\lambda (X), \lambda (Y)), \ X,Y \in \mathfrak{h}.$$
Now we continue with the next  general remark. Assume that we are given a compact homogeneous space $K/H$ with the property that the pair $(\mathfrak{k},\mathfrak{h})$ consists of canonical real forms of $\mathfrak{k}^\mathbb{C}$ and $\mathfrak{h}^\mathbb{C}$ (that is, the corresponding Cartan subalgebras coincide). Thus
$$\mathfrak{k}^\mathbb{C}=\mathfrak{t}^\mathbb{C}+\sum_{\alpha\in\Delta(\mathfrak{h})}\mathfrak{k}_{\alpha}+\sum_{\beta\in\Delta\setminus\Delta(\mathfrak{h})}\mathfrak{k}_{\beta}$$
$$\mathfrak{h}^\mathbb{C}=\mathfrak{t}^c+\sum_{\alpha\in\Delta(\mathfrak{h})}\mathfrak{k}_{\alpha},$$
$$\mathfrak{m}^\mathbb{C}=\sum_{\beta\in\Delta\setminus\Delta(\mathfrak{h})}\mathfrak{k}_{\beta}.$$
Here $\Delta$ again denotes the root system for $\mathfrak{k}$ and $\Delta(\mathfrak{h}) \subset \Delta$ is the corresponding root system of  $\mathfrak{h}$. Assume that we can choose $T\in\mathfrak{t}^\mathbb{C}$ satisfying the equations 
$$\alpha(T)=\pm i \ \mbox{for} \ \alpha\in\Delta\setminus\Delta(\mathfrak{h}), \ \alpha(T)\in i\mathbb{R} \ \mbox{for} \ \alpha\in\Delta(\mathfrak{h})$$
where $i$ or $-i$ are chosen in a way to ensure that the above system of linear equations has a solution.
Note that  $\mathfrak{t}^\mathbb{C}=\mathfrak{t}+i\mathfrak{t}$, where $\mathfrak{t}$ denotes the real form of $\mathfrak{t}^\mathbb{C}$ consisting of vectors $H\in\mathfrak{t}^\mathbb{C}$ such that $\alpha(H)\in\mathbb{R}$ for all $\alpha\in\Delta$. It follows that $T\in i\mathfrak{t}=\sum_{\alpha\in\Delta}\mathbb{R}(iH_{\alpha})$. Therefore, $T\in\mathfrak{k}$. But $(ad\,T|_{\mathfrak{m}})^2=-\operatorname{id}$, because by construction it satisfies this equality on $\mathfrak{m}^\mathbb{C}$. By the remark at the beginning of the proof, for $J=ad\,T|_{\mathfrak{m}}$ one obtains $X^{\lambda}_{J^*}=T$. But $T$ does not belong to any wall $C_{\alpha},\alpha\in\Delta\setminus\Delta(\mathfrak{h})$. Thus we see that under the adopted assumptions the twistor bundle over $K/H$ must be symplectically fat, by Theorem \ref{thm:sufficient-twistors}.

In our example, we can find the required $T\in\mathfrak{t}^\mathbb{C}.$ We have
$$\Delta = \{  \pm e_{s} \pm e_{t}, \ \pm 2e_{s} \ | \ 1\leq s,t \leq n \} \ \ \Delta (\mathfrak{h}) = \{ \pm e_{s} \pm e_{t}, \  | \ 1\leq s,t \leq n  \},$$
therefore
$$\Delta \backslash \Delta (\mathfrak{h}) = \{  \pm 2e_{s} \ | \ 1\leq s \leq n \}$$
Since $\{ 2e_{s}\}_{s=1}^{n}$ are linearly independent we can find $T\in \mathfrak{h}^c$ such that \linebreak $\alpha(T)=\pm i$, $\alpha\in\Delta\setminus\Delta(\mathfrak{h})$. Moreover
$$\alpha(T)\in i\mathbb{R} \  \mbox{for} \ \alpha\in\Delta(\mathfrak{h}). \eqno\qedhere$$
\end{proof}

\noindent{\bf Acknowledgment}. We express our sincere thanks to the anonymous referee for his/her valuable advice which essentially improved the contents of this article.

\end{document}